\theoremstyle{remark}
\theoremstyle{plain}
\newtheorem{lem}{Lemma}
\newtheorem{thm}{Theorem}
\newtheorem{prop}{Proposition}
\theoremstyle{definition}
\newtheorem{defin}{Definition}
\newtheorem{hyp}{Assumption}
\newcommand{\F}{\mathcal F}
\newcommand{\Oo}{\mathcal O}
\newcommand{\N}{\mathbb N}
\newcommand{\R}{\mathbb R}
\newcommand{\E}{\mathcal E}
\title{Maximal $L^2$ regularity for Ornstein-Uhlenbeck equation in convex sets of Banach spaces}
\author{G. Cappa\footnote{Dipartimento di Matematica e Informatica, Università degli Studi di Parma, Parco Area delle Scienze 53/A, PARMA, Italy.
E-mail address: gianluca.cappa@nemo.unipr.it}}
\begin{document}
\maketitle

\textbf{AMS subject classification} 35R15, 35B65

\textbf{Keywords:} Ornstein-Uhlenbeck, Maximal Sobolev regularity, infinite dimension.

\begin{abstract}
We study the elliptic equation $\lambda u-L^{\Omega}u=f$ in an open convex subset $\Omega$ of an infinite dimensional separable Banach space $X$ endowed with a centered non-degenerate Gaussian measure $\gamma$, where $L^\Omega$ is the Ornstein-Uhlenbeck operator. We prove that for $\lambda>0$ and $f\in L^2(\Omega,\gamma)$ the weak solution $u$ belongs to the Sobolev space $W^{2,2}(\Omega,\gamma)$. Moreover we prove that $u$ satisfies the Neumann boundary condition in the sense of traces at the boundary of $\Omega$. This is done by finite dimensional approximation.
\end{abstract}

\section{Introduction}

Let $X$ be a separable Banach space, let $\gamma$ be a centered non-degenerate Gaussian measure in $X$ with covariance $Q$, and let $H=Q^{1/2}(X)$ be the associated Cameron-Martin space. In this paper we consider the equation
\begin{equation}
 \lambda u-L^\Omega u=f \quad\text{in }\Omega,
 \label{Neum_probl_infinite_dim}
\end{equation}
where $\lambda>0$ and $f\in L^2(\Omega,\gamma)$ are given, $\Omega$ is an open convex set of $X$ and $L^\Omega$ is the Ornstein-Uhlenbeck operator associated to the quadratic form
\[\E_{\Omega,\gamma}(u,v):=\int_\Omega\langle \nabla_Hu,\nabla_Hv\rangle_H d\gamma\quad\text{for}\quad u,v\in W^{1,2}(\Omega,\gamma).\]
Precise definition of the Sobolev spaces $W^{1,2}(\Omega,\gamma)$, $W^{2,2}(\Omega,\gamma)$, and of the $H-$gradient $\nabla_H$ are in the next section.
As usual a function $u\in W^{1,2}(\Omega,\gamma)$ is called  weak solution to \eqref{Neum_probl_infinite_dim} if
\[\int_\Omega\left(\lambda u\varphi+\langle \nabla_Hu,\nabla_H\varphi\rangle_H\right)d\gamma=\int_\Omega f\varphi\ d\gamma\quad\forall\varphi\in W^{1,2}(\Omega,\gamma).\]
It is not hard to see that for every $\lambda>0$ and $f\in L^2(\Omega,\gamma)$, problem \eqref{Neum_probl_infinite_dim} has a unique weak solution $u$.
In this paper we prove a maximal regularity result for the weak solution $u$ of \eqref{Neum_probl_infinite_dim}, that is for every $f\in L^2(\Omega,\gamma)$ the weak solution $u$ belongs to $W^{2,2}(\Omega,\gamma)$ and there exists $C>0$ independent of $f$ such that
\begin{equation}
  \|u\|_{W^{2,2}(\Omega,\gamma)}\leq C\|f\|_{L^2(\Omega,\gamma)}.
  \label{maximal_L2_regularity}
\end{equation}

It is sufficient to have that \eqref{maximal_L2_regularity} holds if $F$ is a cylindrical smooth bounded function (see \textbf{Section \ref{preliminari}}), because the space of such functions is dense in $L^2(\Omega,\gamma)$. In this case, we define a sequence of functions $\{u_n\}_{n\in\N}$, by using the cylindrical approximation $\{\Omega_n\}_{n\in\N}$ of $\Omega$ made in \cite{LunMirPal}. In particular,
\[u_n=\varphi_n\circ\pi_n\]
where $\pi_n(X)$ is a finite dimensional subspace of $H$, identified in an obvious way with $\R^{q}$ with $q=q(n,f)$. So $\pi_n(\Omega_n)$ is identified with an open subset $\Oo_n$ of $\R^{q}$, and $\varphi_n:\Oo_n\subset\R^{q}\rightarrow\R$ solves
\begin{equation}
\left\{
 \begin{split}
  &\lambda \psi-L^{\Oo_n} \psi=\widetilde{f} &\text{in }&\Oo_n\subset \R^{q},\\
  &\frac{\partial \psi}{\partial\nu}=0 &\text{on }&\partial\Oo_n
 \end{split}\right.
 \label{Neum_probl_finite_dim_prototipo}
\end{equation}
where $\widetilde{f}$ is a suitable smooth bounded function. Here, the reference measure is the standard Gaussian measure $N_{0,I}$, and $\nabla_H$ is the usual gradient. For the finite dimensional problems \eqref{Neum_probl_finite_dim_prototipo} we prove dimension free $W^{2,2}$ estimates. Therefore the sequence  $\{u_n\}_{n\in\N}$ is bounded in $W^{2,2}(\Omega,\gamma)$, and a subsequence weakly converges to $u\in W^{2,2}(\Omega,\gamma)$. Eventually we prove that $u$ is a weak solution of \eqref{Neum_probl_infinite_dim}.

Moreover, under some regularity assumption on the boundary of $\Omega$, we prove that the weak solution of \eqref{Neum_probl_infinite_dim} satisfies
\begin{equation}
  \label{cond_neumann}
  \langle\nabla_Hu,\nabla_Hg\rangle_H=0
\end{equation}
on $\partial\Omega$, in the sense of traces. This identity plays the role of the Neumann boundary condition. We use the same sequence $\{u_n\}_{n\in\N}$ defined above, and we show that
\[\int_{\Omega}(\lambda u_n-L^{\Omega_n}u_n)\varphi\ d\gamma=\int_\Omega f\varphi\ d\gamma,\]
for all smooth cylindrical functions $\varphi$, where $L^{\Omega_n}$ is the Ornstein-Uhlenbeck operator associated to the quadratic form $\E_{\Omega_n,\gamma}$. Applying the integration by parts formula \eqref{eq:formuala_itegr_part} we get
\[\int_\Omega\lambda\varphi u_n\ d\gamma+\int_\Omega\langle\nabla_H u_n,\nabla_H \varphi\rangle_H d\gamma=\int_\Omega f\varphi\ d\gamma+\int_{\partial\Omega}\langle\nabla_H u_n,\frac{\nabla_H g}{|\nabla_H g|_H}\rangle_H \varphi\ d\rho,\]
here $g:X\rightarrow\R$ is a suitable convex function such that $g^{-1}(0)=\partial\Omega$ and $\rho$ is the surface measure associated to the Gaussian measure, see \cite{feyel}. Taking the limit along a weakly convergent subsequence, we obtain
\[\int_\Omega\lambda\varphi u\ d\gamma+\int_\Omega\langle\nabla_H u,\nabla_H \varphi\rangle_H d\gamma=\int_\Omega f\varphi\ d\gamma+\int_{\partial\Omega}\langle\nabla_H u,\frac{\nabla_H g}{|\nabla_H g|_H}\rangle_H \varphi\ d\rho,\]
for all smooth cylindrical functions $\varphi$. Since $u$ is the weak solution of \eqref{Neum_probl_infinite_dim} then we can conclude that
\[\int_{\partial\Omega}\langle\nabla_H u,\frac{\nabla_H g}{|\nabla_H g|_H}\rangle_H \varphi\ d\rho=0\]
for all smooth cylindrical functions $\varphi$, that is equivalent to \eqref{cond_neumann}.

The maximal $L^p$ regularity for Ornstein-Uhlenbeck equations was established in \cite{Meyer} by Meyer when $\Omega$ is the whole space $X$ for $1<p<\infty$.
When $\Omega\subsetneqq X$ and $p=2$ the maximal regularity problem was also studied in Hilbert spaces by Da Prato and Lunardi in \cite{LunPrat} with Dirichlet boundary condition and in \cite{PratoL_domini_infini} with Neumann boundary condition for a different class of differential operators that doesn't contain the classical Ornstein-Uhlenbeck operator. Also, the proof in \cite{PratoL_domini_infini} is different from ours because it uses a penalization method approaching the weak solution by a sequence of solutions of problems on whole $X$.

In finite dimension more results are available. Maximal $L^p$ regularity, for $p\in(1,\infty)$, was studied by Metafune, Pruess, Rhandi, and Schnaubelt in  \cite{metafune} when $\Omega=\R^n$ for a class of second order differential operators with unbounded coefficients that contains symmetric Ornstein-Uhlenbeck operators. Maximal $L^2$ regularity in open convex sets of $\R^n$, with Neumann boundary condition, was established in \cite{PratoLun} by methods different from ours.

\section{Preliminaries and definitions}
\label{preliminari}
In this section we recall some basic definitions and notations. Hereafter $h_i$ will denote the $i-$th element of an orthonormal basis of $H$; for every $k\in\N$ set $\widehat{h}_k=Q^{-1}(h_k)$ (see \cite[p.~39-40]{bogachev}).
If $x_i\in\R^n$ we denote by $D_i$ the directional derivative in the direction of $x_i$ while by $\partial_i$ we denote the directional derivative in the direction of $h_i$.

\begin{defin}
 $\F C_b^k(X)$ is the space of cylindrical functions of the form
 \[f(x)=\varphi(l_1(x),\ldots,l_n(x)),\]
 with $\varphi\in C^k_b(\R^n)$, $l_i$, $\ldots$, $l_n\in X^*$ and $n\in\N$.
\end{defin}

\begin{defin}
 $W^{1,2}(\Omega,\gamma)$ is the Sobolev space defined as the completion of the restriction to $\Omega$ of the elements of space $\F C^1_b(X)$ with respect to the norm
 \[\|f\|_{W^{1,2}(\Omega,\gamma)}^2=\int_{\Omega}\left(f^2+|\nabla_H f|^2_H\right) d\gamma.\]
 where $\nabla_H$ is the gradient along the direction of $H$.
\end{defin}

\begin{defin}
 $W^{2,2}(\Omega,\gamma)$ is the Sobolev space defined as the completion of the restriction to $\Omega$ of the elements of space $\F C^2_b(X)$ with respect to the norm
 \[\|f\|_{W^{2,2}(\Omega,\gamma)}^2=\int_{\Omega}\left(f^2+|\nabla_H f|^2_H+\|D^2_H f\|^2_\mathcal{H}\right) d\gamma.\]
 where $D^2_H$ is the $H$-Hessian operator and $\|\cdot\|_\mathcal{H}$ is the Hilbert-Schmidt norm.
\end{defin}

\begin{defin}[Weak solution]
  The function $u\in W^{1,2}(\Omega,\gamma)$ is a weak solution of \eqref{Neum_probl_infinite_dim} if
  \begin{equation}
    \label{eq:weak_solution}
    \int_{\Omega}\lambda u\varphi\ d\gamma+\int_{\Omega}\langle\nabla_H u,\nabla_H\varphi\rangle_H d\gamma=\int_{\Omega}f\varphi\ d\gamma\quad\forall\varphi\in\F C^1_b(X)
  \end{equation}
  or equivalently for all $\varphi\in W^{1,2}(\Omega,\gamma)$.
\end{defin}

\begin{hyp}
\label{Ipostesi_su_Omega}
We suppose that $\Omega=g^{-1}(-\infty,0)$, where $g:X\rightarrow\R$ is a continuous function such that
 \begin{itemize}
  \item $g\in W^{2,p}(X,\gamma)$ for all $p>1$;
  \item there exists $\delta>0$ such that $\displaystyle \frac{1}{|\nabla_H g|_H}\in L^p(g^{-1}(-\delta,\delta),\gamma)$ for all $p>1$.
 \end{itemize}
\end{hyp}
These conditions allow the definition of traces of Sobolev functions  at $g^{-1}(0)=\partial\Omega$, see \cite{CeladaLunardi}.

Let $\varphi,\psi\in W^{1,2}(\Omega,\gamma)$, we recall the integration by parts formula:
\begin{equation}
  \label{eq:formuala_itegr_part}
  \int_{\Omega}\partial_k\varphi\ \psi\ d\gamma=-\int_{\Omega}\varphi\partial_k\psi\ d\gamma+\int_{\Omega}\varphi\psi\ \widehat{h}_k d\gamma+\int_{\partial\Omega}\mathrm{Tr}\varphi \mathrm{Tr}\psi\frac{\partial_k g}{|\nabla_Hg|_H}\ d\rho.
\end{equation}
where in the last integral $\rho$ is the surface measure associated to the Gaussian measure and $\mathrm{Tr}\varphi$, $\mathrm{Tr}\psi$ are the traces of the function $\varphi,\psi$ (see \cite{CeladaLunardi}).

In \cite{Cappa} the \emph{Logarithmic-Sobolev inequality} is proved:
 \begin{equation}
   \label{eq:log-sob_infinite_dim}
   \int_\Omega f^2\log(f^2)d\gamma\leq\int_{\Omega}|\nabla_H f|_H^2d\gamma+\|f\|_{L^2(\Omega,\gamma)}^2\log(\|f\|_{L^2(\Omega,\gamma)}^2),
 \end{equation}
that holds for every $f\in W^{1,2}(\Omega,\gamma)$.

For $u,v\in W^{1,2}(\Omega,\gamma)$ let
\[\E_{\Omega,\gamma}(u,v):=\int_\Omega \langle\nabla_Hu,\nabla_Hv\rangle_H\ d\gamma\]
be the quadratic form associated to $\nabla_H$; we set
\begin{equation}
  \begin{split}
  D(L^\Omega)=\bigg\{u\in W^{1,2}(\Omega,\gamma)&:\exists f\in L^2(\Omega,\gamma)\text{ s.t. }.\\
   &\E_{\Omega,\gamma}(u,v)=-\int_\Omega fv\ d\gamma,\ \forall v\in W^{1,2}(\Omega,\gamma)\bigg\}
  \label{def:dominio_L}
\end{split}
\end{equation}
and we put $L^{\Omega}u:=f$.

Let $\Oo$ be a smooth convex set of $\R^n$ and let $\mu$ be the standard Gaussian measure in $\R^n$. Let $L^\Oo$ be the Ornstein-Uhlenbeck operator associated to the quadratic form  $\E_{\Oo,\mu}$. It is known, see \cite{PratoLun}, that
\begin{equation}
  D(L^{\Oo})=\left\{f\in W^{2,2}(\Oo,\mu):\ \langle x,\nabla f\rangle\in L^2(\Oo,\mu)\text{ and }\frac{\partial f}{\partial\nu}=0\right\}
  \label{caratterizzazione_domini_L_dim_finita}
\end{equation}
where $\nu(x)$ is the exterior normal vector to $\partial\Oo$ at $x$. Moreover
\begin{equation}
L^{\Oo}f(x)=\Delta f(x)-\langle x,\nabla f(x)\rangle\text{ for every }f\in D(L^\Oo).
\label{formula_di_L}
\end{equation}

We recall the finite dimensional logarithmic Sobolev inequality
 \begin{equation}
   \label{eq:log-sob_finite_dim}
   \int_\Oo f^2\log(f^2)d\mu\leq\int_{\Oo}|\nabla_H f|_H^2d\mu+\|f\|_{L^2(\Oo,\mu)}^2\log(\|f\|_{L^2(\Oo,\mu)}^2),
 \end{equation}
that holds for each $f\in W^{1,2}(\Oo,\mu)$, see \cite{PratoLun}.

\section{Finite-dimensional estimates}
Let $\Oo$ be an open smooth convex subset of $\R^n$, with fixed $n$. We assume that
\[\Oo=\{x\in\R^n:\ g(x)<0\}\]
where $g$ is a smooth convex function whose gradient does not vanish at the boundary $\partial\Oo$. We denote by $\nu(x)$ the exterior normal vector to $\partial\Oo$ at $x$, $\nu(x)=\frac{\nabla g(x)}{|\nabla g(x)|}$.
Let $\mu$ be the standard Gaussian measure in $\R^n$ and let $L^\Oo$ be the associated Ornstein-Uhlenbeck operator, that is
\[L^\Oo\psi(x)=\sum_{i=1}^n D_{ii}\psi(x)-\sum_{i=1}^n x_i D_i\psi(x)\text{ for }\psi\in D(L^\Oo).\]

In this section we consider the following problem
\begin{equation}
  \label{Neum_probl_finite_dim}
\left\{\begin{split}
  &\lambda \psi-L^{\Oo} \psi=f &\text{in }&\Oo\subset R^n,\\
  &\frac{\partial u}{\partial\nu}=0 &\text{on }&\partial\Oo
 \end{split}\right.
\end{equation}
where $f\in L^2(\Oo,\mu)$ and $\lambda>0$.

Let us introduce a weighted surface measure on $\partial\Oo$:
\[d\sigma=N(x)dH^{n-1}\]
where $N(x)=(2\pi)^{-n/2}\exp(-|x|^2/2)$ is the Gaussian weight and $dH^{n-1}$ is the usual Hausdorff $(n-1)$ dimensional measure. Using the surface measure $d\sigma$ the integration by parts formula reads as:
\begin{equation}
\int_{\Oo}D_k\varphi\psi\ d\mu=-\int_{\Oo}\varphi D_k\psi\ d\mu+\int_{\Oo}x_k\varphi\psi\ d\mu+\int_{\partial\Oo} \frac{D_kg}{|\nabla g|}\varphi\psi\ d\sigma,
\label{integr_by_parts_formula}
\end{equation}
for each $\varphi,\psi\in W^{1,2}(\Oo,\mu)$ one of which with bounded support, so the boundary integral is meaningful. Indeed $W^{1,2}(\Oo,\mu)\subset W^{1,2}_{loc}(\Oo,dx)$ and the trace at the boundary of any function in $W^{1,2}(\Oo,\mu)$ belongs to $L^2_{loc}(\partial\Oo,dH^{n-1})=L^2_{loc}(\partial\Oo,d\sigma)$.

Applying \eqref{integr_by_parts_formula} with $\varphi$ replaced by $D_k\varphi$ and summing up, we find
\begin{equation}
\int_{\Oo}L^\Oo\varphi\psi\ d\mu=-\int_{\Oo}\langle \nabla\varphi,\nabla\psi\rangle d\mu+\int_{\partial\Oo}\frac{\langle \nabla\varphi,\nabla g\rangle}{|\nabla g|}\psi\ d\sigma
\label{integr_formula}
\end{equation}
for every $\varphi\in W^{2,2}(\Oo,\mu)$ , $\psi\in W^{1,2}(\Oo,\mu)$ one of which with bounded support.

Now we give dimension free estimates for the weak solution $u\in W^{1,2}(\Oo,\mu)$ to \eqref{Neum_probl_finite_dim} with $\lambda>0$ and $f\in L^2(\Oo,\mu)$. We can suppose $f\in C^\infty_c(\Oo)$ because $C^\infty_c(\Oo)$ is dense in $L^2(\Oo,\mu)$. In this case, thanks to classical results about elliptic equations with smooth coefficients we know that the weak solution $u$ of \eqref{Neum_probl_finite_dim} belongs to $C^\infty(\overline{\Oo})\subset W^{2,2}_{loc}(\Oo,\mu)$. Since $\Oo$ can be unbounded, we introduce a smooth cutoff function $\theta:\R^n\rightarrow\R$ such that
\[0\leq\theta(x)\leq1,\ |\nabla\theta(x)|\leq2\ \forall x\in\R^n,\quad \theta\equiv1\text{ in }B(0,1),\quad\theta\equiv0\text{ outside }B(0,2)\]
and we set, for $R>0$
\[\theta_R(x)=\theta(x/R),\quad x\in\R^n.\]

For the $W^{1,2}$ estimates we take $u$ as a test function in the definition of weak solution and we get
\begin{equation}
  \lambda\int_{\Oo}u^2d\mu+\int_{\Oo}|\nabla u|^2d\mu=\int_{\Oo}fu\ d\mu,
  \label{eq:u_soluzione_debole}
\end{equation}
then
\begin{equation}
 \int_{\Oo}u^2d\mu\leq\frac{1}{\lambda^2}\|f\|^2_{L^2(\Oo,\mu)},\quad\int_{\Oo}|\nabla u|^2d\mu\leq\frac{1}{\lambda}\|f\|^2_{L^2(\Oo,\mu)}.
 \label{stime_regolari}
\end{equation}

The following lemma takes into the account the convexity of $\Oo$.

\begin{lem}
\label{lemma:bordo_Omega}
If $u\in C^2(\overline{\Oo})$ satisfies  $\langle \nabla u,\nu\rangle=0$ on $\partial\Oo$ then
\[\langle D^2u\cdot\nabla u,\nu\rangle\leq0\text{ on }\partial\Oo.\]
\end{lem}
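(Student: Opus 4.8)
The plan is to argue pointwise at an arbitrary $x_0\in\partial\Oo$, exploiting the representation $\Oo=\{g<0\}$ with $g$ smooth, convex, and $\nabla g\neq0$ on $\partial\Oo$, so that $\nu=\nabla g/|\nabla g|$. First I would rewrite the boundary hypothesis $\langle\nabla u,\nu\rangle=0$ on $\partial\Oo$ in the equivalent form $\langle\nabla u,\nabla g\rangle=0$ on $\partial\Oo$ (legitimate since $|\nabla g|>0$ there). In particular, at $x_0$ the vector $\nabla u(x_0)$ is orthogonal to $\nabla g(x_0)$, hence tangent to the level set $\{g=0\}=\partial\Oo$.

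Next, consider the scalar function $F:=\langle\nabla u,\nabla g\rangle$, which is $C^1$ on a neighbourhood of $\overline{\Oo}$ because $u\in C^2(\overline{\Oo})$ and $g$ is smooth. By the previous step $F\equiv0$ on $\partial\Oo=\{g=0\}$, so $\nabla F(x_0)$ is orthogonal to the tangent space of $\partial\Oo$ at $x_0$, i.e. parallel to $\nabla g(x_0)$; consequently $\langle\nabla F(x_0),v\rangle=0$ for every $v$ tangent to $\partial\Oo$ at $x_0$, in particular for $v=\nabla u(x_0)$. Differentiating $F$ by the product rule and using symmetry of the Hessians gives $\nabla F=D^2u\cdot\nabla g+D^2g\cdot\nabla u$.

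Pairing this identity with $\nabla u(x_0)$, using $\langle\nabla F(x_0),\nabla u(x_0)\rangle=0$ and again the symmetry of $D^2u(x_0)$, one obtains
\[
0=\langle D^2u(x_0)\nabla g(x_0),\nabla u(x_0)\rangle+\langle D^2g(x_0)\nabla u(x_0),\nabla u(x_0)\rangle=\langle D^2u(x_0)\cdot\nabla u(x_0),\nabla g(x_0)\rangle+\langle D^2g(x_0)\nabla u(x_0),\nabla u(x_0)\rangle,
\]
hence $\langle D^2u(x_0)\cdot\nabla u(x_0),\nabla g(x_0)\rangle=-\langle D^2g(x_0)\nabla u(x_0),\nabla u(x_0)\rangle$. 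Since $g$ is convex, $D^2g(x_0)$ is positive semidefinite, so the right-hand side is $\leq0$; dividing by $|\nabla g(x_0)|>0$ yields $\langle D^2u(x_0)\cdot\nabla u(x_0),\nu(x_0)\rangle\leq0$. As $x_0\in\partial\Oo$ was arbitrary, this proves the lemma.

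The only slightly delicate point is the claim that a $C^1$ function vanishing on $\partial\Oo$ has gradient normal to $\partial\Oo$ there; this is immediate by differentiating $t\mapsto F(\gamma(t))$ along any $C^1$ curve $\gamma$ lying in $\partial\Oo$ with $\gamma(0)=x_0$, so it is genuinely routine. I expect the main conceptual content — and the place where convexity enters — to be precisely the sign of $\langle D^2g\,\nabla u,\nabla u\rangle$. As an alternative one could avoid $g$ altogether: differentiate $\langle\nabla u,\nu\rangle=0$ along a curve in $\partial\Oo$ through $x_0$ with velocity $\nabla u(x_0)$, obtaining $\langle D^2u\cdot\nabla u,\nu\rangle=-\langle\nabla u,(D\nu)\nabla u\rangle$, and invoke that the Weingarten map $D\nu$ of a convex hypersurface is positive semidefinite; but the $g$-based version is cleaner since it sidesteps setting up the shape operator and uses convexity of $\Oo$ directly through $D^2g\ge 0$.
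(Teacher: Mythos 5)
Your proof is correct and is essentially the paper's argument: both differentiate the Neumann condition tangentially along $\partial\Oo$ in the direction $\nabla u(x_0)$ (legitimate since $\nabla u(x_0)$ is tangent there) and conclude from $D^2g\ge0$. The only cosmetic difference is that you work with the unnormalized $\nabla g$, which spares you the paper's step of checking that the term coming from differentiating $1/|\nabla g|$ vanishes when paired with a tangent vector; your ``alternative'' via the Weingarten map $D\nu$ is in fact exactly what the paper writes down.
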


\begin{proof}
We recall that $\partial\Oo=g^{-1}(0)$ where $g:\R^n\rightarrow\R$ is a smooth convex function. Let $\tau\in\R^n$ such that $\langle\tau,\nu(x)\rangle=0$ for $x\in\partial\Oo$, then we have
\begin{equation}
  \label{derivata_direzionale_nu}
  \langle\frac{\partial\nu}{\partial\tau}(x),\tau\rangle\geq0\quad\forall x\in\partial\Oo.
\end{equation}
Indeed
\[
\begin{split}
  \frac{\partial\nu}{\partial\tau}&=\frac{\partial}{\partial\tau}\left(\frac{\nabla g}{|\nabla g|}\right)=\frac{1}{|\nabla g|}\frac{\partial}{\partial\tau}(\nabla g)+\frac{\partial}{\partial\tau}\left(\frac{1}{|\nabla g|}\right)\nabla g\\
  &=\frac{1}{|\nabla g|} D^2g\cdot \tau+\langle\nabla\left(\frac{1}{|\nabla g|}\right),\tau\rangle\nabla g,
\end{split}
\]
then for $x\in\partial\Oo$ we have
\[
\begin{split}
   \langle\frac{\partial\nu}{\partial\tau}(x),\tau\rangle&=\frac{1}{|\nabla g(x)|} \langle D^2g(x)\cdot \tau,\tau\rangle +\langle\nabla\left(\frac{1}{|\nabla g(x)|}\right),\tau\rangle \langle\nabla g(x),\tau\rangle\\
   &=\frac{1}{|\nabla g(x)|} \langle D^2g(x)\cdot \tau,\tau\rangle\geq0
\end{split}
\]
since $D^2g$ is a positive semi-definite symmetric matrix. Now we recall that $\langle \nabla u,\nu\rangle=0$ on $\partial\Oo$ therefore
\[
  \frac{\partial}{\partial\tau}(\langle \nabla u(x),\nu(x)\rangle)=\langle D^2u(x)\ \tau,\nu(x)\rangle+\langle\nabla u(x),\frac{\partial\nu}{\partial\tau} (x)\rangle=0,\quad x\in\partial\Oo
\]
for each $\tau\in\R^n$ such that $\langle\tau,\nu\rangle=0$ on $\partial\Oo$. If we take $\tau=\nabla u(x)$ then we get
\[\langle D^2u(x)\cdot\nabla u(x),\nu(x)\rangle=-\langle\tau,\frac{\partial\nu}{\partial\tau} (x)\rangle\leq0,\quad x\in\partial\Oo.\]
\end{proof}

Now we can give an estimate of the second order derivatives of $u$.

\begin{prop}
  For every $f\in C_c^{\infty}(\Oo)$ and $\varepsilon>0$ there exists $R_0>0$ such that for $R>R_0$ the solution $u$ to \eqref{Neum_probl_finite_dim} satisfies
  \begin{equation}
    \left(1-\varepsilon\right)\int_{\Oo}\theta_R^2 \mathrm{Tr}[(D^2u)^2]d\mu\leq\left(2+\frac{\varepsilon}{\lambda}\right)\|f\|^2_{L^2(\Oo,\mu)}.
    \label{eq:stima_derivate_seconde_epsilon_dim_finita}
  \end{equation}
\end{prop}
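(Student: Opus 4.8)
The plan is to differentiate the equation, test against $\theta_R^2 D_k u$ summed over $k$, and carefully track the boundary terms, using Lemma~\ref{lemma:bordo_Omega} to discard the one surface integral that cannot be controlled directly. Since $f\in C^\infty_c(\Oo)$, the solution $u$ is smooth up to the boundary, so all manipulations below are justified. Applying $D_k$ to $\lambda u - L^\Oo u = f$ and using that $D_k$ commutes with $\Delta$ while $D_k(\langle x,\nabla u\rangle) = D_k u + \langle x,\nabla D_k u\rangle$, one gets, for each $k$,
\begin{equation}
  \lambda D_k u - L^\Oo(D_k u) + D_k u = D_k f.
  \label{eq:diff_eq}
\end{equation}
Now multiply \eqref{eq:diff_eq} by $\theta_R^2 D_k u$, integrate over $\Oo$ against $\mu$, and sum over $k=1,\dots,n$. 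For the second-order term I would apply the integration-by-parts identity \eqref{integr_formula} with $\varphi = D_k u$ and $\psi = \theta_R^2 D_k u$ (the latter has bounded support, so the boundary integral is meaningful), producing
\[
\int_{\Oo} L^\Oo(D_k u)\,\theta_R^2 D_k u\, d\mu = -\int_{\Oo}\langle\nabla D_k u,\nabla(\theta_R^2 D_k u)\rangle\,d\mu + \int_{\partial\Oo}\frac{\langle\nabla D_k u,\nabla g\rangle}{|\nabla g|}\,\theta_R^2 D_k u\,d\sigma.
\]
Expanding $\nabla(\theta_R^2 D_k u) = \theta_R^2\nabla D_k u + 2\theta_R D_k u\,\nabla\theta_R$ and summing over $k$, the principal term $\sum_k\int_\Oo\theta_R^2|\nabla D_k u|^2\,d\mu = \int_\Oo\theta_R^2\,\mathrm{Tr}[(D^2u)^2]\,d\mu$ appears with the right sign. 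The boundary term becomes $\sum_k\int_{\partial\Oo}\theta_R^2\frac{D_k u}{|\nabla g|}\langle\nabla D_k u,\nabla g\rangle\,d\sigma = \int_{\partial\Oo}\theta_R^2\,\frac{\langle D^2u\cdot\nabla u,\nabla g\rangle}{|\nabla g|}\,d\sigma = \int_{\partial\Oo}\theta_R^2\,\langle D^2u\cdot\nabla u,\nu\rangle\,d\sigma \le 0$ by Lemma~\ref{lemma:bordo_Omega}, since $u$ satisfies the Neumann condition; because this term sits on the same side as the good term, dropping it strengthens the inequality in the desired direction.

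Collecting everything, the summed identity reads
\[
\int_\Oo\theta_R^2\,\mathrm{Tr}[(D^2u)^2]\,d\mu + \lambda\int_\Oo\theta_R^2|\nabla u|^2\,d\mu + \int_\Oo\theta_R^2|\nabla u|^2\,d\mu \le \int_\Oo\theta_R^2\langle\nabla f,\nabla u\rangle\,d\mu - 2\sum_k\int_\Oo\theta_R D_k u\,\langle\nabla D_k u,\nabla\theta_R\rangle\,d\mu,
\]
where the boundary contribution has already been used to pass to $\le$. For the first term on the right I would integrate by parts once more in $x_k$ to move the derivative off $f$: writing $\int_\Oo\theta_R^2 D_k f\,D_k u\,d\mu$ and using \eqref{integr_by_parts_formula}, one produces $-\int_\Oo f\,D_k(\theta_R^2 D_k u)\,d\mu + \int_\Oo x_k f\,\theta_R^2 D_k u\,d\mu + \int_{\partial\Oo}\frac{D_k g}{|\nabla g|}f\,\theta_R^2 D_k u\,d\sigma$; the boundary term vanishes because $f$ has compact support in $\Oo$. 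Summing over $k$ gives $-\int_\Oo f\,\theta_R^2 L^\Oo u\,d\mu - 2\int_\Oo f\,\theta_R\langle\nabla u,\nabla\theta_R\rangle\,d\mu$ (the $\langle x,\nabla u\rangle$ piece combining with the Laplacian piece to reconstruct $-L^\Oo u$, up to a $\sum_k x_k\theta_R^2 D_k u$ term that reassembles correctly). Then $L^\Oo u = \lambda u - f$, so $-\int_\Oo f\theta_R^2 L^\Oo u\,d\mu = -\lambda\int_\Oo f\theta_R^2 u\,d\mu + \int_\Oo f^2\theta_R^2\,d\mu$, and by Cauchy--Schwarz together with \eqref{stime_regolari} this is bounded by $\|f\|_{L^2(\Oo,\mu)}^2 + \lambda\|f\|_{L^2}\|u\|_{L^2}\le 2\|f\|_{L^2(\Oo,\mu)}^2$, which accounts for the leading constant $2$ on the right side of \eqref{eq:stima_derivate_seconde_epsilon_dim_finita}.

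The remaining work is to absorb all the cutoff error terms — those containing $\nabla\theta_R$ — into the left-hand side at the cost of an $\varepsilon$. Each such term is supported in the annulus $R\le|x|\le 2R$ where $|\nabla\theta_R|\le 2/R$, and contains either one factor of $\nabla\theta_R$ paired with $\theta_R D_k u\,\nabla D_k u$ or with $f\,\theta_R\nabla u$. For the first type I would use Young's inequality $2\theta_R|D_k u||\nabla D_k u||\nabla\theta_R|\le \varepsilon\theta_R^2|\nabla D_k u|^2 + \varepsilon^{-1}|D_k u|^2|\nabla\theta_R|^2$, where the first piece is absorbed into $\int\theta_R^2\mathrm{Tr}[(D^2u)^2]$ and the second is bounded by $(4/(\varepsilon R^2))\int_\Oo|\nabla u|^2\,d\mu\le (4/(\varepsilon\lambda R^2))\|f\|_{L^2}^2$ using \eqref{stime_regolari}; for fixed $\varepsilon$ this is $\le (\varepsilon/\lambda)\|f\|_{L^2}^2$ once $R$ exceeds some $R_0$, which is exactly where the dependence of $R_0$ on $\varepsilon$ enters. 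The terms with $f$ are even easier since $f$ is compactly supported, so for $R$ large they simply vanish. I expect the main obstacle to be \emph{bookkeeping}: keeping the signs straight through the double integration by parts, making sure the extra $\sum_k x_k\theta_R^2 D_k u$ contributions genuinely reassemble into $-L^\Oo u$ rather than leaving an orphan term, and checking that every surface integral is either nonpositive (the Hessian one, via Lemma~\ref{lemma:bordo_Omega}) or zero (the $f$-ones, via compact support) — there is no deep inequality beyond convexity and Young, but one wrong sign collapses the estimate.
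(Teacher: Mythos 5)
Your proposal is correct and follows essentially the same route as the paper: differentiate the equation, test against $\theta_R^2 D_k u$, use \eqref{integr_formula}, discard the boundary term via Lemma~\ref{lemma:bordo_Omega}, and absorb the cutoff errors with Young's inequality and \eqref{stime_regolari}. The only (cosmetic) difference is in the $\int\theta_R^2\langle\nabla f,\nabla u\rangle\,d\mu$ term, which you handle by a second integration by parts carrying $\theta_R^2$ along, whereas the paper simply observes that $\theta_R\equiv 1$ on the support of $f$ for large $R$ and identifies the integral with $-\int_{\Oo} f\,L^{\Oo}u\,d\mu=-\int_{\Oo}(\lambda u-f)f\,d\mu$ directly; both yield the same bound $2\|f\|^2_{L^2(\Oo,\mu)}$.
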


\begin{proof}
  Recall that $u\in C^\infty(\overline{\Oo})$; differentiating \eqref{Neum_probl_finite_dim}
with respect to $x_h$ yields
\[\lambda D_hu-\Delta D_hu-\langle x,\nabla(D_hu)\rangle+D_hu=D_hf.\]
Multiplying by $D_hu\theta_R^2$ we obtain
\[\left(\lambda+1\right)(D_hu)^2\theta_R^2- \Delta D_hu\cdot D_hu\theta_R^2-\langle x,\nabla(D_hu)\rangle D_hu\theta_R^2=D_hfD_hu\theta_R^2.\]
Integrating over $\Oo$ and using \eqref{integr_formula} yields
\[
 \begin{split}
  &\int_{\Oo}(\lambda+1)(D_hu)^2\theta^2_Rd\mu+\int_{\Oo} |\nabla(D_hu)|^2 \theta_R^2d\mu+2\int_{\Oo}\theta_R\langle  \nabla(D_hu),\nabla\theta_R\rangle D_hu\ d\mu\\
  &=\int_{\partial\Oo}\frac{\langle  \nabla(D_hu),\nabla g\rangle D_hu}{|\nabla g|}\theta_R^2d\sigma+\int_{\Oo} D_hfD_hu\theta_R^2d\mu.
 \end{split}
\]
Summing over $h$ we obtain
\[
 \begin{split}
  &\int_{\Oo}(\lambda+1)|\nabla u|^2\theta^2_Rd\mu+\int_{\Oo}\mathrm{Tr}[(D^2u)^2]\theta_R^2d\mu+2\int_{\Oo}\langle  D^2u\cdot \nabla u,\nabla\theta_R\rangle \theta_R\ d\mu\\
  &=\int_{\partial\Oo}\frac{\langle  D^2u\cdot \nabla u,\nabla g\rangle}{|\nabla g|}\theta_R^2d\sigma+\int_{\Oo}\langle  \nabla f,\nabla u\rangle\theta_R^2d\mu.
 \end{split}
\]
Since $f$ has compact support, for $R$ large enough $\theta_R\equiv1$ on the support of $f$. For such $R$ we obtain
\[\left|\int_{\Oo}\langle  \nabla f,\nabla u\rangle\theta_R^2d\mu\right|=\left|-\int_{\Oo}L^\Oo u fd\mu\right|=\left|\int_{\Oo}(\lambda u-f) fd\mu\right|\leq 2\|f\|_{L^2(\Oo,\mu)}^2.\]
Moreover
\[
\begin{split}
    &\left|\int_{\Oo}\langle  D^2u \nabla u,\nabla\theta_R\rangle \theta_R\ d\mu\right| \leq\int_\Oo \sum_{i,j=1}^n |D_{ij}u\ D_ju\ D_i\theta_R|\theta_R\ d\mu\\
    &\leq\frac{1}{2}\int_\Oo \sum_{i,j=1}^n |D_{ij}u|^2 |D_i\theta_R|\theta_R^2\ d\mu+\frac{1}{2}\int_\Oo \sum_{i,j=1}^n |D_ju|^2|D_i\theta_R|\ d\mu\\
    &\leq\frac{1}{2}\frac{\||\nabla\theta|\|_\infty}{R} \int_\Oo \theta_R^2 \mathrm{Tr}[(D^2u)^2]d\mu+\frac{1}{2}\frac{\||\nabla\theta|\|_\infty}{R}\int_\Oo|\nabla u|^2 d\mu\\
    &\leq\left(\frac{1}{2}\int_{\Oo}\mathrm{Tr}[(D^2u)^2]\theta_R^2d\mu+\frac{1}{2\lambda}\|f\|_{L^2(\Oo,\mu)}^2\right)\frac{\||\nabla\theta|\|_\infty}{R}.
\end{split}
\]
Taking $R$ large enough, such that $\||\nabla\theta|\|_\infty/R\leq\varepsilon$, we get
\[\left(1-\varepsilon\right)\int_{\Oo}\theta_R^2 \mathrm{Tr}[(D^2u)^2]d\mu\leq\left(2+\frac{\varepsilon}{\lambda}\right)\|f\|^2_{L^2(\Oo,\mu)} +\int_{\partial\Oo}\theta^2_R\frac{\langle D^2u\cdot \nabla u,\nabla g\rangle}{|\nabla g|}d\sigma.\]
By using \textbf{Lemma \ref{lemma:bordo_Omega}}, the statement follows.
\end{proof}

\begin{thm}
  \label{thm:stima_max_regolarita}
  For each $\lambda>0$ there exists $C=C(\lambda)>0$, independent of $n$ and $\Oo$, such that for each $f\in L^2(\Oo,\mu)$ the weak solution $u$ of problem \eqref{Neum_probl_finite_dim} belongs to $W^{2,2}(\Oo,\mu)$, and satisfies
  \begin{equation}
    \|u\|_{W^{2,2}(\Oo,\mu)}\leq C\|f\|_{L^2(\Oo,\mu)}.
    \label{eq:stima_max_reg_dim_finita}
  \end{equation}
\end{thm}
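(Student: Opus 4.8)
The plan is to assemble the bound from the $W^{1,2}$ estimates \eqref{stime_regolari} and the cut-off second-order estimate of the preceding Proposition, after (i) removing the cutoff $\theta_R$ by letting $R\to\infty$, and (ii) passing from smooth compactly supported data to general $f\in L^2(\Oo,\mu)$ by an approximation argument driven by the a priori estimate.

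First suppose $f\in C_c^\infty(\Oo)$, so that, as recalled above, the weak solution $u$ of \eqref{Neum_probl_finite_dim} is smooth up to the boundary and in particular lies in $W^{2,2}_{loc}(\Oo,\mu)$. Take $\varepsilon=1/2$ in the Proposition: there is $R_0>0$ so that for every $R>R_0$
\[
\frac12\int_{\Oo}\theta_R^2\,\mathrm{Tr}[(D^2u)^2]\,d\mu\le\Bigl(2+\frac{1}{2\lambda}\Bigr)\|f\|_{L^2(\Oo,\mu)}^2.
\]
Since $\theta_R\equiv1$ on $B(0,R)$ and $0\le\theta_R\le1$, the integrands $\theta_R^2\,\mathrm{Tr}[(D^2u)^2]$ increase pointwise to $\mathrm{Tr}[(D^2u)^2]\ge0$ as $R\to\infty$; as the right-hand side above does not depend on $R$, monotone convergence gives
\[
\int_{\Oo}\mathrm{Tr}[(D^2u)^2]\,d\mu\le\Bigl(4+\frac{1}{\lambda}\Bigr)\|f\|_{L^2(\Oo,\mu)}^2.
\]
Because $D^2u$ is symmetric, $\|D^2u\|_{\mathcal H}^2=\mathrm{Tr}[(D^2u)^2]$, so $u\in W^{2,2}(\Oo,\mu)$, and adding the last inequality to the two inequalities in \eqref{stime_regolari} yields
\[
\|u\|_{W^{2,2}(\Oo,\mu)}^2\le\Bigl(\frac{1}{\lambda^2}+\frac{2}{\lambda}+4\Bigr)\|f\|_{L^2(\Oo,\mu)}^2=:C(\lambda)^2\,\|f\|_{L^2(\Oo,\mu)}^2,
\]
where $C(\lambda)$ depends only on $\lambda$, since every constant entering \eqref{stime_regolari} and \eqref{eq:stima_derivate_seconde_epsilon_dim_finita} is independent of $n$ and of $\Oo$.

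For general $f\in L^2(\Oo,\mu)$, pick $f_k\in C_c^\infty(\Oo)$ with $f_k\to f$ in $L^2(\Oo,\mu)$ and let $u_k$ be the weak solution of \eqref{Neum_probl_finite_dim} with datum $f_k$. By linearity $u_k-u_j$ is the weak solution with datum $f_k-f_j$, so the estimate just proved shows that $\{u_k\}$ is a Cauchy sequence in $W^{2,2}(\Oo,\mu)$, hence converges to some $v\in W^{2,2}(\Oo,\mu)$; likewise \eqref{stime_regolari} applied to $u_k-u$ (the weak solution with datum $f_k-f$) gives $u_k\to u$ in $W^{1,2}(\Oo,\mu)$. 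Since the $W^{2,2}$ norm dominates the $W^{1,2}$ norm, $v=u$, so $u\in W^{2,2}(\Oo,\mu)$ and \eqref{eq:stima_max_reg_dim_finita} follows by letting $k\to\infty$ in $\|u_k\|_{W^{2,2}(\Oo,\mu)}\le C(\lambda)\|f_k\|_{L^2(\Oo,\mu)}$.

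The genuinely delicate points have already been dealt with: the boundary term is absorbed by \textbf{Lemma \ref{lemma:bordo_Omega}} via the convexity of $\Oo$, and the dimension-free control of the cross term $\langle D^2u\cdot\nabla u,\nabla\theta_R\rangle$ is in the Proposition. The theorem itself is essentially bookkeeping, the only things to check being that the final constant carries no dependence on $n$ or $\Oo$ (it does not, by inspection of \eqref{stime_regolari} and \eqref{eq:stima_derivate_seconde_epsilon_dim_finita}), and the legitimacy of the two limiting procedures — the limit $R\to\infty$, available precisely because the right-hand side of \eqref{eq:stima_derivate_seconde_epsilon_dim_finita} is $R$-independent, and the limit $k\to\infty$, where the a priori estimate upgrades $L^2$-convergence of the data to $W^{2,2}$-convergence of the solutions.
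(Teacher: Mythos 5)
Your proof is correct and follows essentially the same route as the paper: remove the cutoff in \eqref{eq:stima_derivate_seconde_epsilon_dim_finita} by monotone convergence, combine with \eqref{stime_regolari}, and extend to general $f\in L^2(\Oo,\mu)$ by density using the a priori estimate. The only (harmless) differences are that the paper also lets $\varepsilon\to0$ to get the sharper constant $2$ in place of your $4+\tfrac{1}{\lambda}$ for the second-order term, and that you spell out the approximation step (Cauchy sequence in $W^{2,2}$, identification of the limit via $W^{1,2}$ convergence) which the paper leaves implicit.
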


\begin{proof}
  Let $f\in C^\infty_c(\Oo)$. Taking the limit as $R\rightarrow\infty$ in \eqref{eq:stima_derivate_seconde_epsilon_dim_finita} and using the monotone convergence theorem, we get
  \[\left(1-\varepsilon\right)\int_{\Oo}\mathrm{Tr}[(D^2u)^2]d\mu\leq\left(2+\frac{\varepsilon}{\lambda}\right)\|f\|^2_{L^2(\Oo,\mu)}.\]
  Now taking the limit as $\varepsilon\rightarrow0$ we get
  \begin{equation}
    \int_{\Oo}\mathrm{Tr}[(D^2u)^2]d\mu\leq2\|f\|^2_{L^2(\Oo,\mu)}.
    \label{eq:stima_derivate_seconde_dim_finita}
  \end{equation}
  Taking into account \eqref{eq:u_soluzione_debole}, \eqref{stime_regolari}, and \eqref{eq:stima_derivate_seconde_dim_finita} we obtain
  \[\begin{split}
    \|u\|_{W^{2,2}(\Oo,\mu)}^2&=\|u\|_{L^2(\Oo,\mu)}+\||\nabla u|\|_{L^2(\Oo,\mu)}+\|\mathrm{Tr}[(D^2u)^2]\|_{L^2(\Oo,\mu)}\\
    &\leq\left(\frac{1}{\lambda^2}+\frac{1}{\lambda}+2\right)\|f\|^2_{L^2(\Oo,\mu)}
  \end{split}\]
  which is \eqref{eq:stima_max_reg_dim_finita} with $C(\lambda)=\frac{1}{\lambda^2}+\frac{1}{\lambda}+2$.
  For $f\in L^2(\Oo,\mu)$ the statement follows approaching it by a sequence of functions belonging to $C^\infty_c(\Oo)$.
\end{proof}

Now we get a characterization of the domain of $L^\Oo$. We recall that \eqref{caratterizzazione_domini_L_dim_finita} holds, and we prove that the condition $\langle \cdot,\nabla f\rangle\in L^2(\Oo,\mu)$ can be omitted.

\begin{prop}
  If $f\in W^{2,2}(\Oo,\mu)$ then $\langle x,\nabla f\rangle\in L^2(\Oo,\mu)$, moreover the map
  \[f\mapsto\langle\cdot,\nabla f\rangle\]
  is continuous from $W^{2,2}(\Oo,\mu)$ to $L^2(\Oo,\mu)$.
\end{prop}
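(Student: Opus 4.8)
The plan is to prove the quantitative estimate
\[
\int_\Oo\langle x,\nabla f\rangle^2\,d\mu\le C(n)\,\|f\|_{W^{2,2}(\Oo,\mu)}^2\qquad\text{for every }f\in\F C^2_b(\R^n),
\]
and then to conclude by density. Indeed $W^{2,2}(\Oo,\mu)$ is, by definition, the completion of the restrictions to $\Oo$ of $\F C^2_b(\R^n)$, so the linear map $f\mapsto\langle\cdot,\nabla f\rangle$ extends to a bounded operator $W^{2,2}(\Oo,\mu)\to L^2(\Oo,\mu)$, and for a general $f\in W^{2,2}(\Oo,\mu)$ the element $\langle\cdot,\nabla f\rangle$ is the $L^2(\Oo,\mu)$-limit of $\langle\cdot,\nabla f_m\rangle$ along any sequence $f_m\to f$ in $W^{2,2}(\Oo,\mu)$. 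By the Cauchy--Schwarz inequality $\langle x,\nabla f\rangle^2\le|x|^2|\nabla f|^2$, hence it suffices to bound $\int_\Oo|x|^2|\nabla f|^2\,d\mu$.

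For this I would integrate by parts, keeping the cutoff functions $\theta_R$ introduced above to handle the possible unboundedness of $\Oo$. Applying \eqref{integr_by_parts_formula} with $\varphi=(D_kf)^2\theta_R^2$, which has bounded support, and $\psi=x_j\in W^{1,2}(\Oo,\mu)$, using $D_jx_j=1$, and summing over $j$ and $k$, one obtains
\begin{equation*}
\begin{split}
\int_\Oo|x|^2|\nabla f|^2\theta_R^2\,d\mu
&=n\int_\Oo|\nabla f|^2\theta_R^2\,d\mu+2\int_\Oo\langle (D^2f)x,\nabla f\rangle\,\theta_R^2\,d\mu\\
&\quad+2\int_\Oo|\nabla f|^2\,\theta_R\,\langle x,\nabla\theta_R\rangle\,d\mu-\int_{\partial\Oo}\langle x,\nu\rangle\,|\nabla f|^2\,\theta_R^2\,d\sigma.
\end{split}
\end{equation*}
The first term on the right is at most $n\int_\Oo|\nabla f|^2\,d\mu$. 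For the second, the pointwise bound $|\langle(D^2f)x,\nabla f\rangle|\le(\mathrm{Tr}[(D^2f)^2])^{1/2}\,|x|\,|\nabla f|$ and Young's inequality give $\tfrac14\int_\Oo|x|^2|\nabla f|^2\theta_R^2\,d\mu+4\int_\Oo\mathrm{Tr}[(D^2f)^2]\,d\mu$, and the first summand is absorbed on the left. The third term is supported on $\{R\le|x|\le2R\}$, where $|\langle x,\nabla\theta_R\rangle|\le 2\||\nabla\theta|\|_\infty$, so it is dominated by $C\int_{\{|x|\ge R\}}|\nabla f|^2\,d\mu$, which tends to $0$ as $R\to\infty$.

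The remaining term, and the only genuinely delicate point, is the boundary integral $-\int_{\partial\Oo}\langle x,\nu\rangle\,|\nabla f|^2\,\theta_R^2\,d\sigma$: here convexity of $\Oo$ is essential, exactly as in \textbf{Lemma \ref{lemma:bordo_Omega}}. Since $\Oo$ is convex (and, as may be assumed, $0\in\overline\Oo$), for $x\in\partial\Oo$ the outer normal satisfies $\langle x,\nu(x)\rangle\ge0$, so this term is nonpositive and can be discarded. Collecting the estimates and letting $R\to\infty$ by monotone convergence yields
\[
\tfrac34\int_\Oo|x|^2|\nabla f|^2\,d\mu\le n\int_\Oo|\nabla f|^2\,d\mu+4\int_\Oo\mathrm{Tr}[(D^2f)^2]\,d\mu\le(n+4)\,\|f\|_{W^{2,2}(\Oo,\mu)}^2,
\]
which is the desired bound. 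I expect the control of the boundary term to be the main obstacle: it is precisely in order to produce a boundary quantity of the favourable form $\langle x,\nu\rangle\,|\nabla f|^2$ — rather than $\langle x,\nabla f\rangle\,\langle\nu,\nabla f\rangle$, which carries no sign — that one passes to $|x|^2|\nabla f|^2$ via Cauchy--Schwarz before integrating by parts; the other terms are the routine cutoff bookkeeping already carried out in the proof of the second-order estimate \eqref{eq:stima_derivate_seconde_epsilon_dim_finita} above.
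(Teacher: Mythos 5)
Your route is genuinely different from the paper's: the paper controls $\int_\Oo (D_if)^2x_i^2\,d\mu$ by splitting according to whether $cx_i^2>\log|D_if|$ and invoking the logarithmic Sobolev inequality \eqref{eq:log-sob_finite_dim}, which avoids boundary terms altogether (at the price of a non-homogeneous bound), whereas you integrate by parts and use convexity through the sign of a boundary integral, in the spirit of \textbf{Lemma \ref{lemma:bordo_Omega}}. Your identity for $\int_\Oo|x|^2|\nabla f|^2\theta_R^2\,d\mu$, the absorption via Young's inequality, and the treatment of the cutoff term are all correct.

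The gap is the parenthetical ``as may be assumed, $0\in\overline\Oo$''. This cannot be assumed without loss of generality: translating $\Oo$ changes the Gaussian measure $\mu$, so the problem is not translation invariant, and the convex sets arising in this paper (cylindrical approximations $\Oo_n\times\R^d$ of an arbitrary convex $\Omega\subset X$) need not contain the origin. If $0\notin\overline\Oo$ the sign claim fails: for $\Oo=\{x\in\R^n:\ x_1>1\}$ one has $\nu\equiv(-1,0,\dots,0)$ and $\langle x,\nu(x)\rangle=-1$ on $\partial\Oo$, so your boundary term $-\int_{\partial\Oo}\langle x,\nu\rangle|\nabla f|^2\theta_R^2\,d\sigma$ is \emph{positive} and equals a boundary integral of $|\nabla f|^2$ that your argument does not control (and which cannot be discarded). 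The repair is routine but must be made explicit: fix $x_0\in\Oo$, write $\langle x,\nabla f\rangle=\langle x-x_0,\nabla f\rangle+\langle x_0,\nabla f\rangle$ (the second summand is trivially bounded by $|x_0|\,|\nabla f|$ in $L^2$), and run your integration by parts with $\psi=x_j-(x_0)_j$; the boundary term then carries the factor $\langle x-x_0,\nu(x)\rangle\ge0$, which is the correct consequence of convexity, and the resulting constant depends additionally on $|x_0|$ --- harmless here, since the Proposition is stated for a fixed $\Oo$, but worth noting since elsewhere in the paper dimension- and domain-free constants are the whole point.
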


\begin{proof}
 Let $f\in W^{2,2}(\Oo,\mu)$, then
 \[\int_\Oo|\langle\nabla f,x \rangle|^2 d\mu=\int_\Oo \sum_{i=1}^n(D_i f x_i)^2 d\mu\]
 by assumption $D_i f\in W^{1,2}(\Oo,\mu)$ and if $c<1/4$, by using \eqref{eq:log-sob_finite_dim}, we have
 \[\begin{split}
   \int_{\Oo}&(D_i f(x))^2 x_i^2e^{-|x|^2/2}dx\\
   &=\int_{\{x\in\Oo:\ cx_i^2>\log|D_if(x)|\}}(D_i f(x))^2 x_i^2e^{-|x|^2/2}dx\\
   &\phantom{=}+\int_{\{x\in\Oo:\ cx_i^2\leq\log|D_if(x)|\}}(D_i f(x))^2 x_i^2e^{-|x|^2/2}dx\\
   &\leq\int_\Oo e^{2cx_i^2}x_i^2e^{-|x|^2/2}dx+\int_\Oo\frac{1}{c}|D_if|^2\log|D_if|e^{-|x|^2/2}dx\\
   &\leq C_1+\frac{1}{c}\left(\int_\Oo |\nabla D_if|d\mu+\frac{1}{2}\int_\Oo (D_if)^2 d\mu\log\left(\int_\Oo (D_if)^2 d\mu\right)\right).
 \end{split}
 \]
 Summing over $i$ from $1$ to $n$ we have $\langle\nabla f,x \rangle\in L^2(\Oo,\mu)$.
\end{proof}

\section{Approximation by cylindrical functions}
\label{sez:approssimazione_cilind}
Now we consider the infinite dimensional problem. Let $\Omega\subset X$ be an open convex set and let $\{\Omega_n\}$ be a sequence of cylindrical open convex sets, defined in \cite{LunMirPal}, of the form $\Omega_n=\pi_n^{-1}(\Oo_n)$ where $\Oo_n\subset F_n$, $F_n$ a is finite dimensional subspace of $Q(X^*)\subset H$ with $\dim F_n=j(n)\leq n$, $F_n\subset F_{n+1}$ for $n\in\N$, and $\pi_n:X\rightarrow F_n$ is the projection defined below. Let $\{h_n\}_{n\in\N}\subset Q(X^*)$ be an orthonormal basis of the Cameron-Martin space $H$ such that $F_n=\mathrm{span}\{h_1,\ldots,h_{j(n)}\}$. Therefore
\[\pi_n(x)=\sum_{i=1}^{j(n)}\widehat{h}_i(x)h_i.\]
Moreover $\Omega_{n+1}\subset\Omega_n$, $\partial\Oo_n$ is smooth, $\Omega\subset\Omega_n$ and
\[\overline{\Omega}=\bigcap_{n\in\N}\overline{\Omega}_n,\quad \gamma\left(\bigcap_{n\in\N}\Omega_n\backslash\Omega\right)=0.\]
We recall that since $\Omega$ and $\Omega_n$ are open convex sets, then $\gamma(\partial\Omega)=\gamma(\partial\Omega_n)=0$.

Now we show that the restriction to $\Omega$ of cylindrical continuous smooth functions is dense in $L^2(\Omega,\gamma)$. Let $\psi\in L^2(\Omega,\gamma)$, then the zero extension outside $\Omega$, $\overline{\psi}$, belongs to $L^2(X,\gamma)$. We have from \cite[\textbf{Corollary 3.5.2}]{bogachev} that there exists a sequence of $L^2$ cylindrical functions $\psi_n$ that converges to $\overline{\psi}$ in $L^2(X,\gamma)$. In its turn , each $\psi_n$ may be approached by a sequence of $C^\infty_b$ functions.

Therefore we suppose $f\in\F C^\infty_b(X)$. Then, for some $k\in\N$,
\[f(x)=w(l_1(x),\ldots,l_k(x))\]
where $w\in C^\infty_b(\R^k)$, $l_i\in X^*$ for $i=1,\ldots,k$.

Let $G=G(n,f):=\mathrm{span}\{F_n,Q(l_1),\ldots,Q(l_k)\}$. Then $G$ is a subspace of $H$ of dimension $q=q(n,f)\leq j(n)+k$; setting $d:=q-j(n)$ let $\Oo=\Oo(n,f):=\Oo_n\times \R^d$. If we denote by
\[\pi_G (x)=\sum_{i=1}^q\widehat{h}_i(x)h_i\]
then
\[f(x)=\widetilde{f}(\pi_G(x))\]
where $\widetilde{f}\in C^\infty_b(G)$. Let $\gamma_G$ be the induced measure $\gamma\circ \pi_G^{-1}$ in $G$; if $G$ is identified with $\R^q$ through  the isomorphism $x\mapsto(\widehat{h}_1(x),\ldots,\widehat{h}_q(x))$ then $\gamma_G$ is the standard Gaussian measure in $\R^q$.

We recall that $L^{\Omega_n}$ is the Ornstein-Uhlenbeck operator associated to the quadratic form $\E_{\Omega_n,\gamma}$ while $L^{\Oo}$ is Ornstein-Uhlenbeck operator associated to the quadratic form $\E_{\Oo,\gamma_G}$.

\begin{prop}
  Let $v$ be the weak solution of the finite dimensional problem
  \[\lambda v-L^{\Oo}v=\widetilde{f}_{|\Oo} \quad\text{in }\Oo\]
  Then $u(x):=v(\pi_G(x))$ is the weak solution of
  \[\lambda u-L^{\Omega_n}u=f_{|\Omega_n} \text{in }\Omega_n\]
\end{prop}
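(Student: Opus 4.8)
The plan is to check directly that $u$ satisfies the weak formulation of the $\Omega_n$-problem, namely
\[
\int_{\Omega_n}\bigl(\lambda u\varphi+\langle\nabla_H u,\nabla_H\varphi\rangle_H\bigr)\,d\gamma=\int_{\Omega_n}f\varphi\,d\gamma\qquad\forall\,\varphi\in\F C^1_b(X);
\]
since \eqref{Neum_probl_infinite_dim} has a unique weak solution on $\Omega_n$, this is all that is required. I would first record that $u$ is well defined and lies in $W^{1,2}(\Omega_n,\gamma)$: indeed $x\in\Omega_n$ iff $\pi_n(x)\in\Oo_n$ iff $\pi_G(x)\in\Oo_n\times\R^d=\Oo$, so $u(x)=v(\pi_G(x))$ makes sense on $\Omega_n$; and since $\gamma\circ\pi_G^{-1}=\gamma_G$ and $\nabla_H(v\circ\pi_G)=\sum_{i=1}^q(D_iv\circ\pi_G)\,h_i$, composition with $\pi_G$ is an isometry from $W^{1,2}(\Oo,\gamma_G)$ onto a closed subspace of $W^{1,2}(\Omega_n,\gamma)$, while $v\in W^{2,2}(\Oo,\gamma_G)$ by \textbf{Theorem \ref{thm:stima_max_regolarita}}.

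Next, fix $\varphi\in\F C^1_b(X)$, depending on $l_1',\dots,l_m'\in X^*$, and set $G'=\mathrm{span}\{G,Q(l_1'),\dots,Q(l_m')\}$, a finite dimensional subspace of $H$ of dimension $q'\ge q$. Identifying $G'$ with $\R^{q'}$ through coordinates $y_i=\widehat h_i$ ordered so that $y_1,\dots,y_{j(n)}$ correspond to $F_n$ and $y_1,\dots,y_q$ to $G$, the image measure $\gamma\circ\pi_{G'}^{-1}$ is $N_{0,I}$ on $\R^{q'}$, the set $\Omega_n$ is $\{y\colon(y_1,\dots,y_{j(n)})\in\Oo_n\}$, the function $u$ depends only on $(y_1,\dots,y_q)$ and there coincides with $v$, and $\varphi=\eta(y_1,\dots,y_{q'})$ with $\eta\in C^1_b(\R^{q'})$; moreover $\nabla_H u=\sum_{i=1}^q(D_iv)\,h_i$, so that $\langle\nabla_H u,\nabla_H\varphi\rangle_H=\sum_{i=1}^qD_iv\,D_i\eta$. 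Hence each of the three integrals above becomes an integral over $\Oo_n\times\R^{q'-j(n)}$ against the standard Gaussian of $\R^{q'}$.

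Then I would integrate out the last $q'-q$ coordinates. Setting $\bar\eta(y_1,\dots,y_q):=\int_{\R^{q'-q}}\eta(y_1,\dots,y_q,z)\,dN_{0,I}(z)$, one has $\bar\eta\in C^1_b(\R^q)\subset W^{1,2}(\Oo,\gamma_G)$ with $D_i\bar\eta=\int_{\R^{q'-q}}D_i\eta\,dN_{0,I}$ for $i\le q$, by differentiation under the integral sign; since neither $v$, nor $\widetilde f$, nor $D_iv$ depends on $z$, Fubini's theorem yields
\[
\int_{\Omega_n}\lambda u\varphi\,d\gamma=\int_{\Oo}\lambda v\,\bar\eta\,d\gamma_G,\qquad\int_{\Omega_n}f\varphi\,d\gamma=\int_{\Oo}\widetilde f\,\bar\eta\,d\gamma_G,
\]
\[
\int_{\Omega_n}\langle\nabla_H u,\nabla_H\varphi\rangle_H\,d\gamma=\sum_{i=1}^q\int_{\Oo}D_iv\,D_i\bar\eta\,d\gamma_G=\int_{\Oo}\langle\nabla v,\nabla\bar\eta\rangle\,d\gamma_G.
\]
Applying the weak formulation satisfied by $v$ on $\Oo$ with test function $\bar\eta\in W^{1,2}(\Oo,\gamma_G)$ then gives exactly the identity sought for $u$, and since $\varphi$ was arbitrary the proposition follows.

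The computational parts (the change of variables, Fubini, differentiating under the integral) are routine. The one point needing a little care is the alignment of the coordinates along the chain $F_n\subseteq G\subseteq G'$, so that $\Omega_n$, $\Oo$, $u$, $f$ and the arbitrary test function $\varphi$ are simultaneously functions of the first $q'$ coordinates of one and the same (extended) orthonormal basis of $H$; this is what legitimizes the reduction to the finite dimensional weak formulation, and is the main — though essentially bookkeeping — obstacle.
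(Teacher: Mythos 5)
Your proof is correct and follows essentially the same route as the paper: both exploit the product decomposition $\gamma=\gamma_G\otimes\widetilde\gamma$ and the fact that $u$, $\nabla_H u$ and $f$ factor through $\pi_G$, reducing the weak formulation on $\Omega_n$ to that of $v$ on $\Oo$ via Fubini. The only (immaterial) difference is that you average the test function over the complementary variables first and invoke the weak formulation of $v$ once, whereas the paper applies it slice by slice in $\widetilde\varphi(\cdot+y)$ and then integrates in $y$.
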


\begin{proof}
 We remark that the space $X$ can be split as $X=G\times\widetilde{X}$ where $\widetilde{X}=(I-\pi_G)(X)$, and $\gamma=\gamma_G\otimes\widetilde{\gamma}$ where $\widetilde{\gamma}=\gamma\circ(I-\pi_G)^{-1}$ is the measure induced on $\widetilde{X}$ by the projection $I-\pi_G$. Let $\varphi\in W^{1,2}(\Omega_n,\gamma)$, then
 \begin{equation*}
   \begin{split}
     \int_{\Omega_n}&\left(\lambda u(x)\varphi(x)+\langle\nabla_H u(x),\nabla_H\varphi(x)\rangle_H\right) \gamma(dx)\\
     &= \int_{\Omega_n}\lambda v(\pi_G(x))\varphi(\pi_G(x)+(I-\pi_G)(x))\\
     &\phantom{=}+\langle\nabla_H v(\pi_G(x)),\nabla_H\varphi(\pi_G(x)+(I-\pi_G)(x))\rangle_H \gamma(dx)\\
     &=\int_{\Oo\times\widetilde{X}}\lambda v(\xi)\widetilde{\varphi}(\xi+y)\\
     &\phantom{=}+\langle\nabla v(\xi), \nabla\widetilde{\varphi}(\xi+y)\rangle\gamma_G(d\xi)\widetilde{\gamma}(dy)\quad(\text{where } \widetilde{\varphi}(\cdot+y)\in W^{1,2}(\Oo,\gamma_G))\\
     &=\int_{\Oo\times\widetilde{X}} \widetilde{f}(\xi)\widetilde{\varphi}(\xi+y)\gamma_G(d\xi)\widetilde{\gamma}(dy)\\
     &=\int_{\Omega_n} \widetilde{f}(\pi_G(x))\varphi(x)\gamma(dx)\\
     &=\int_{\Omega_n} f(x)\varphi(x)\gamma(dx),\\
    \end{split}
  \end{equation*}
  and the statement follows.
\end{proof}

\begin{prop}
  \label{prop:stima_W22_per_u_dim_infinta}
  The function $u$ satisfies
  \[\|u\|_{W^{2,2}(\Omega,\gamma)}\leq K\]
  where
  \[K:=C\|f\|_{L^2(\Omega_1,\gamma)}\]
  and $C$ is the constant of \textbf{Theorem \ref{thm:stima_max_regolarita}}.
\end{prop}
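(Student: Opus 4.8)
The plan is to propagate the dimension-free bound of \textbf{Theorem \ref{thm:stima_max_regolarita}} up to infinite dimensions through the product decomposition already exploited in the previous Proposition. First I would recall that $u(x)=v(\pi_G(x))$, where $v$ is the weak solution of $\lambda v-L^{\Oo}v=\widetilde f_{|\Oo}$ in $\Oo=\Oo_n\times\R^d$. Identifying $G$ with $\R^q$ via $x\mapsto(\widehat h_1(x),\dots,\widehat h_q(x))$, the measure $\gamma_G$ becomes the standard Gaussian measure on $\R^q$ and $\Oo$ becomes a smooth open convex subset of $\R^q$, so \textbf{Theorem \ref{thm:stima_max_regolarita}} applies and gives $v\in W^{2,2}(\Oo,\gamma_G)$ together with
\[\|v\|_{W^{2,2}(\Oo,\gamma_G)}\le C\,\|\widetilde f_{|\Oo}\|_{L^2(\Oo,\gamma_G)},\]
where $C=C(\lambda)$ does not depend on $q=q(n,f)$ nor on $\Oo$.

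Next I would transfer norms from $\Oo$ to $\Omega_n$. Since $\Oo_n\subset F_n\subset G$ one has $\Omega_n=\pi_n^{-1}(\Oo_n)=\pi_G^{-1}(\Oo)$; writing $X=G\times\widetilde X$ with $\widetilde X=(I-\pi_G)(X)$ and $\gamma=\gamma_G\otimes\widetilde\gamma$ as in the previous proof, and $x=\xi+y$ with $\xi=\pi_G(x)\in G$, $y\in\widetilde X$, the function $u$ depends on $x$ only through $\xi$. Consequently $\nabla_Hu(\xi+y)$ lies in $G$ and, under the identification $G\simeq\R^q$, equals $\nabla v(\xi)$, while $D^2_Hu(\xi+y)$ has the same Hilbert--Schmidt norm as $D^2v(\xi)$, namely $\sqrt{\mathrm{Tr}[(D^2v(\xi))^2]}$. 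To make this rigorous I would approximate $v$ in $W^{2,2}(\Oo,\gamma_G)$ by restrictions of functions in $C^2_b(\R^q)$, compose with $\pi_G$ to obtain elements of $\F C^2_b(X)$, and pass to the limit; this simultaneously shows $u\in W^{2,2}(\Omega_n,\gamma)$ and, since $\widetilde\gamma$ is a probability measure and $x\in\Omega_n\iff\xi\in\Oo$,
\[\|u\|_{W^{2,2}(\Omega_n,\gamma)}^2=\int_{\Oo}\big(v^2+|\nabla v|^2+\mathrm{Tr}[(D^2v)^2]\big)\,d\gamma_G=\|v\|_{W^{2,2}(\Oo,\gamma_G)}^2.\]
The same Fubini computation applied to $f(x)=\widetilde f(\pi_G(x))$ gives $\|\widetilde f_{|\Oo}\|_{L^2(\Oo,\gamma_G)}=\|f\|_{L^2(\Omega_n,\gamma)}$.

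Finally, since $\Omega\subset\Omega_n$ the restriction map from $W^{2,2}(\Omega_n,\gamma)$ to $W^{2,2}(\Omega,\gamma)$ is norm non-increasing (it is contractive on the dense set of restricted functions in $\F C^2_b(X)$, hence extends), so $u_{|\Omega}\in W^{2,2}(\Omega,\gamma)$, and using also $\Omega_{n+1}\subset\Omega_n$ (whence $\Omega_n\subset\Omega_1$) I get
\[\|u\|_{W^{2,2}(\Omega,\gamma)}\le\|u\|_{W^{2,2}(\Omega_n,\gamma)}=\|v\|_{W^{2,2}(\Oo,\gamma_G)}\le C\|f\|_{L^2(\Omega_n,\gamma)}\le C\|f\|_{L^2(\Omega_1,\gamma)}=K.\]
The only genuinely delicate step is the identity between the $W^{2,2}$ norm of $u$ on $\Omega_n$ and that of $v$ on $\Oo$ (together with the membership $u\in W^{2,2}(\Omega_n,\gamma)$), which rests on the elementary fact that for a cylindrical function $w\circ\pi_G$ the $H$-gradient and $H$-Hessian reduce to the Euclidean gradient and Hessian of $w$ on $\R^q$; everything else is a bookkeeping of the product structure and of the monotonicity of the $L^2$ and $W^{2,2}$ norms with respect to the shrinking domains.
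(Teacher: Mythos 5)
Your proof is correct and follows essentially the same route as the paper: write $u=v\circ\pi_G$, apply the dimension-free bound of \textbf{Theorem \ref{thm:stima_max_regolarita}} to $v$ on $\Oo\subset\R^q$, transfer the $W^{2,2}$ and $L^2$ norms between $\Oo$ and $\Omega_n$ via the product decomposition $\gamma=\gamma_G\otimes\widetilde\gamma$, and conclude by the monotonicity $\Omega\subset\Omega_n\subset\Omega_1$. The only difference is that you spell out the approximation argument justifying $u\in W^{2,2}(\Omega_n,\gamma)$ and the norm identity, which the paper leaves implicit.
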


\begin{proof}
  We recall that $u(x)=v(\pi_G(x))$. Then
  \begin{equation*}
    \begin{split}
      \|u\|^2&_{W^{2,2}(\Omega,\gamma)}\leq\|u\|^2_{W^{2,2}(\Omega_n,\gamma)}\\
      &=\int_{\Omega_n}|u(x)|^2+\sum_{i=1}^\infty|D_i u(x)|^2+\sum_{i,j=1}^\infty|D_{ij} u(x)|^2\ \gamma(dx)\\
      &=\int_\Oo\left(|v(\xi)|^2+\sum_{i=1}^{q}|\partial_i v(\xi)|^2\right.\\
      &\phantom{=}\left.+\sum_{i,j=1}^q|\partial_{ij} v(\xi)|^2\right) \mu(d\xi)\text{ (By using \textbf{Theorem \ref{thm:stima_max_regolarita}})}\\
      &\leq C^2\|\widetilde{f}\|_{L^2(\Oo,\mu)}^2=C^2\|f\|^2_{L^2(\Omega_n,\gamma)}\leq C^2\|f\|_{L^2(\Omega_1,\gamma)}^2
    \end{split}
  \end{equation*}
\end{proof}

If we consider the sequence $\{u_n\}_{n\in\N}$ of weak solutions of the problems
\[\lambda \psi-L^{\Omega_n}\psi=f_{|\Omega_n}\quad\text{in }\Omega_n.\]
By \textbf{Proposition \ref{prop:stima_W22_per_u_dim_infinta}} it follows
\[\|u_n\|_{W^{2,2}(\Omega,\gamma)}\leq K.\]
Possibly replacing $u_n$ by a subsequence, there exists $u\in W^{2,2}(\Omega,\gamma)$ such that $u_n\rightharpoonup u$ in $W^{2,2}(\Omega,\gamma)$.

\begin{prop}
  The function $u$ is the weak solution of \eqref{Neum_probl_infinite_dim}.
\end{prop}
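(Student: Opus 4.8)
The plan is to pass to the limit along the weakly convergent subsequence $\{u_n\}$ in the weak formulations of the finite‑dimensional problems, using as the only quantitative inputs the dimension‑free bound $\|u_n\|_{W^{2,2}(\Omega_n,\gamma)}\le K$ coming from (the proof of) \textbf{Proposition \ref{prop:stima_W22_per_u_dim_infinta}} and the measure‑theoretic fact that $\gamma(\Omega_n\setminus\Omega)\to0$.

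First I would fix $\varphi\in\F C^1_b(X)$. Since $\Omega\subset\Omega_n$, the restriction of $\varphi$ to $\Omega_n$ lies in $W^{1,2}(\Omega_n,\gamma)$ by definition; testing against it the weak formulation of $\lambda\psi-L^{\Omega_n}\psi=f_{|\Omega_n}$ (of which $u_n$ is the solution) gives
\[\int_{\Omega_n}\lambda u_n\varphi\,d\gamma+\int_{\Omega_n}\langle\nabla_H u_n,\nabla_H\varphi\rangle_H\,d\gamma=\int_{\Omega_n}f\varphi\,d\gamma.\]
I would then write each of the three integrals over $\Omega_n$ as the corresponding integral over $\Omega$ plus an integral over the ``boundary layer'' $\Omega_n\setminus\Omega$.

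For the integrals over the fixed set $\Omega$: from $u_n\rightharpoonup u$ in $W^{2,2}(\Omega,\gamma)$ and the continuity (hence weak continuity) of the inclusion $W^{2,2}(\Omega,\gamma)\hookrightarrow W^{1,2}(\Omega,\gamma)$ and of the maps $w\mapsto w$ and $w\mapsto\nabla_H w$ from $W^{1,2}(\Omega,\gamma)$ into $L^2(\Omega,\gamma)$ and $L^2(\Omega,\gamma;H)$, one gets $u_n\rightharpoonup u$ in $L^2(\Omega,\gamma)$ and $\nabla_H u_n\rightharpoonup\nabla_H u$ in $L^2(\Omega,\gamma;H)$. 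Since $\varphi$ and $|\nabla_H\varphi|_H$ are bounded and $\gamma$ is a probability measure, $\lambda\varphi\in L^2(\Omega,\gamma)$ and $\nabla_H\varphi\in L^2(\Omega,\gamma;H)$, so
\[\int_\Omega\lambda u_n\varphi\,d\gamma\to\int_\Omega\lambda u\varphi\,d\gamma,\qquad\int_\Omega\langle\nabla_H u_n,\nabla_H\varphi\rangle_H\,d\gamma\to\int_\Omega\langle\nabla_H u,\nabla_H\varphi\rangle_H\,d\gamma,\]
while $\int_\Omega f\varphi\,d\gamma$ is independent of $n$. For the integrals over $\Omega_n\setminus\Omega$: since $\Omega_{n+1}\subset\Omega_n$, the sets $\Omega_n\setminus\Omega$ decrease to $\bigl(\bigcap_n\Omega_n\bigr)\setminus\Omega$, which is $\gamma$‑null, so $\gamma(\Omega_n\setminus\Omega)\to0$ by continuity from above. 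Using $\|u_n\|_{W^{2,2}(\Omega_n,\gamma)}\le K$ and the Cauchy--Schwarz inequality,
\[\left|\int_{\Omega_n\setminus\Omega}\lambda u_n\varphi\,d\gamma\right|\le\lambda\|\varphi\|_\infty K\,\gamma(\Omega_n\setminus\Omega)^{1/2}\to0,\]
and likewise $\bigl|\int_{\Omega_n\setminus\Omega}\langle\nabla_H u_n,\nabla_H\varphi\rangle_H\,d\gamma\bigr|\le\||\nabla_H\varphi|_H\|_\infty K\,\gamma(\Omega_n\setminus\Omega)^{1/2}\to0$ and $\bigl|\int_{\Omega_n\setminus\Omega}f\varphi\,d\gamma\bigr|\le\|f\|_\infty\|\varphi\|_\infty\,\gamma(\Omega_n\setminus\Omega)\to0$. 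Letting $n\to\infty$ I obtain
\[\int_\Omega\lambda u\varphi\,d\gamma+\int_\Omega\langle\nabla_H u,\nabla_H\varphi\rangle_H\,d\gamma=\int_\Omega f\varphi\,d\gamma\qquad\text{for all }\varphi\in\F C^1_b(X),\]
which is exactly \eqref{eq:weak_solution}; since $u\in W^{1,2}(\Omega,\gamma)$ and, as recalled in the introduction, \eqref{Neum_probl_infinite_dim} has a unique weak solution, $u$ is the weak solution of \eqref{Neum_probl_infinite_dim}.

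The only genuinely delicate point is the control of the boundary‑layer integrals over $\Omega_n\setminus\Omega$: everything there hinges on the uniform bound $K$, which is precisely the content of the dimension‑free estimate of \textbf{Theorem \ref{thm:stima_max_regolarita}}, combined with $\gamma(\Omega_n\setminus\Omega)\to0$. The passage to the limit over the fixed domain $\Omega$ is a routine weak‑convergence argument, and one should note that it suffices to test against $\varphi\in\F C^1_b(X)$ since, by definition of weak solution, this is equivalent to testing against all of $W^{1,2}(\Omega,\gamma)$.
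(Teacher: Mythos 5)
Your proof is correct and follows essentially the same route as the paper: test the finite-dimensional weak formulation against $\varphi\in\F C^1_b(X)$, split each integral over $\Omega_n$ into the part over $\Omega$ (handled by weak convergence of $u_n$ in $W^{2,2}(\Omega,\gamma)$) and the part over $\Omega_n\setminus\Omega$ (killed by Cauchy--Schwarz, the uniform bound $K$ on $\Omega_n$, and $\gamma(\Omega_n\setminus\Omega)\to0$). Your version is if anything slightly cleaner, since it fixes the paper's typographical slip of writing $\Omega\setminus\Omega_n$ for $\Omega_n\setminus\Omega$ and makes explicit where the bound over $\Omega_n$ (rather than $\Omega$) is needed.
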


\begin{proof}
  We know that for all $\varphi\in \F C^1_b(X)$
  \[\int_{\Omega_n}\lambda u_n\varphi\ d\gamma+\int_{\Omega_n}\langle\nabla_H u_n,\nabla_H\varphi\rangle_H d\gamma=\int_{\Omega_n}f\varphi\ d\gamma.\]
  We claim that
  \[\lim_{n\rightarrow\infty}\int_{\Omega_n}\lambda u_n\varphi\ d\gamma=\int_{\Omega}\lambda u\varphi\ d\gamma.\]
  Indeed,
  \begin{equation}
    \label{eq:per_sol_debole_inf_dim}
    \int_{\Omega_n}\lambda u_n\varphi\ d\gamma=\int_{\Omega}\lambda u_n\varphi\ d\gamma+\int_{\Omega\backslash\Omega_n}\lambda u_n\varphi\ d\gamma;
  \end{equation}
  by the weak convergence
  \[\lim_{n\rightarrow\infty}\int_{\Omega}\lambda u_n\varphi\ d\gamma=\int_{\Omega}\lambda u\varphi\ d\gamma\]
  while
  \[\left|\int_{\Omega\backslash\Omega_n}\lambda u_n\varphi\ d\gamma\right|\leq\lambda\left(\int_{\Omega\backslash\Omega_n} |u_n|^2d\gamma\right)^{1/2}\left(\int_{\Omega\backslash\Omega_n} |\varphi|^2d\gamma\right)^{1/2}\leq\lambda K\left(\int_{\Omega\backslash\Omega_n} |\varphi|^2d\gamma\right)^{1/2}\]
  that goes to zero as $n\rightarrow\infty$ by the absolute continuity of the integral.
  Now we claim that
  \[\lim_{n\rightarrow\infty}\int_{\Omega_n}\langle\nabla_H u_n,\nabla_H\varphi\rangle_H d\gamma=\int_{\Omega}\langle\nabla_H u,\nabla_H\varphi\rangle_H d\gamma.\]
  In fact,
  \[\int_{\Omega_n}\langle\nabla_H u_n,\nabla_H\varphi\rangle_H d\gamma=\int_{\Omega}\langle\nabla_H u_n,\nabla_H\varphi\rangle_H d\gamma+\int_{\Omega\backslash\Omega_n}\langle\nabla_H u_n,\nabla_H\varphi\rangle_H d\gamma.\]
  By the weak convergence in $W^{1,2}(\Omega,\gamma)$
  \[\lim_{n\rightarrow\infty}\int_{\Omega}\langle\nabla_H u_n,\nabla_H\varphi\rangle_H d\gamma=\int_{\Omega}\langle\nabla_H u,\nabla_H\varphi\rangle_H d\gamma\]
  while
  \[
  \begin{split}
    \left|\int_{\Omega\backslash\Omega_n}\langle\nabla_H u_n,\nabla_H\varphi\rangle_H d\gamma\right|&\leq\lambda\left(\int_{\Omega\backslash\Omega_n} |\nabla_Hu_n|_H^2d\gamma\right)^{1/2}\left(\int_{\Omega\backslash\Omega_n} |\nabla_H\varphi|_H^2d\gamma\right)^{1/2}\\
    &\leq\lambda K\left(\int_{\Omega\backslash\Omega_n} |\nabla_H\varphi|_H^2d\gamma\right)^{1/2}
  \end{split}
  \]
  that goes to zero as $n\rightarrow\infty$.

  Moreover,
  \[\lim_{n\rightarrow\infty}\int_{\Omega_n}f\varphi\ d\gamma=\int_{\Omega}f\varphi\ d\gamma.\]
  Therefore letting $n\rightarrow\infty$ in \eqref{eq:per_sol_debole_inf_dim} we get that $u$ satisfies \eqref{eq:weak_solution}.
\end{proof}

Finally we give the maximal regularity estimate.
\begin{thm}
  If $u$ is the weak solution of $\lambda u- L^{\Omega}u=f$ on $\Omega$ then $u\in W^{2,2}(\Omega,\gamma)$ and
  \[\|u\|_{W^{2,2}(\Omega,\gamma)}\leq C\|f\|_{L^2(\Omega,\gamma)}\]
\end{thm}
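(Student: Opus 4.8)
The plan is to collect the pieces already assembled in this section. We have a weak solution $u \in W^{1,2}(\Omega,\gamma)$ of \eqref{Neum_probl_infinite_dim}; its uniqueness was noted in the introduction. On the other hand, for $f \in \F C^\infty_b(X)$ we have constructed, via the cylindrical approximation $\{\Omega_n\}$ of \cite{LunMirPal}, the sequence $\{u_n\}_{n\in\N}$ of weak solutions of $\lambda\psi - L^{\Omega_n}\psi = f_{|\Omega_n}$ in $\Omega_n$, and by \textbf{Proposition \ref{prop:stima_W22_per_u_dim_infinta}} this sequence is bounded in $W^{2,2}(\Omega,\gamma)$ by $K = C\|f\|_{L^2(\Omega_1,\gamma)}$, with $C$ the dimension-free constant of \textbf{Theorem \ref{thm:stima_max_regolarita}}. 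Since $W^{2,2}(\Omega,\gamma)$ is a Hilbert space, a subsequence converges weakly to some $\widetilde u \in W^{2,2}(\Omega,\gamma)$, and the previous proposition identifies $\widetilde u$ as the weak solution of \eqref{Neum_probl_infinite_dim}; by uniqueness $\widetilde u = u$, so $u \in W^{2,2}(\Omega,\gamma)$.

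For the estimate, first take $f \in \F C^\infty_b(X)$. Weak lower semicontinuity of the norm gives
\[
\|u\|_{W^{2,2}(\Omega,\gamma)} \le \liminf_{n\to\infty}\|u_n\|_{W^{2,2}(\Omega,\gamma)} \le K = C\|f\|_{L^2(\Omega_1,\gamma)}.
\]
Now I would let $n$ vary: the bound $\|u_n\|_{W^{2,2}(\Omega,\gamma)} \le C\|f\|_{L^2(\Omega_n,\gamma)}$ actually holds for each $n$ (this is what \textbf{Proposition \ref{prop:stima_W22_per_u_dim_infinta}} proves before the final coarsening to $\Omega_1$), and since $\Omega \subset \Omega_n$ with $\gamma(\bigcap_n \Omega_n \setminus \Omega) = 0$, we have $\|f\|_{L^2(\Omega_n,\gamma)} \to \|f\|_{L^2(\Omega,\gamma)}$ by dominated convergence. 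Hence
\[
\|u\|_{W^{2,2}(\Omega,\gamma)} \le \liminf_{n\to\infty} C\|f\|_{L^2(\Omega_n,\gamma)} = C\|f\|_{L^2(\Omega,\gamma)},
\]
with the same constant $C = C(\lambda) = \lambda^{-2} + \lambda^{-1} + 2$.

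Finally I would remove the restriction $f \in \F C^\infty_b(X)$ by density. Given $f \in L^2(\Omega,\gamma)$, pick $f_k \in \F C^\infty_b(X)$ with $f_k \to f$ in $L^2(\Omega,\gamma)$ (density was established at the start of \textbf{Section \ref{sez:approssimazione_cilind}}), let $u_k$ be the corresponding weak solutions, and note $u_k - u_j$ is the weak solution with datum $f_k - f_j$; the estimate just proved shows $\{u_k\}$ is Cauchy in $W^{2,2}(\Omega,\gamma)$, hence converges to some $w \in W^{2,2}(\Omega,\gamma)$ with $\|w\|_{W^{2,2}(\Omega,\gamma)} \le C\|f\|_{L^2(\Omega,\gamma)}$. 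Passing to the limit in the weak formulation \eqref{eq:weak_solution} (using the $W^{1,2}$ a priori bound $\|u_k\|_{W^{1,2}} \le (\lambda^{-2}+\lambda^{-1})^{1/2}\|f_k\|_{L^2}$ to control convergence of both terms on the left) shows $w$ is the weak solution, so $w = u$ by uniqueness and the estimate follows. The only place demanding a little care is keeping the constant genuinely independent of $n$ and of the dimension $q(n,f)$ throughout the approximation — but that is exactly the content of \textbf{Theorem \ref{thm:stima_max_regolarita}}, which was proved dimension-free for precisely this purpose, so there is no real obstacle.
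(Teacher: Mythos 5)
Your proposal is correct and follows essentially the same route as the paper: the dimension-free bound $\|u_n\|_{W^{2,2}(\Omega,\gamma)}\leq C\|f\|_{L^2(\Omega_n,\gamma)}$ from Proposition \ref{prop:stima_W22_per_u_dim_infinta}, the convergence $\|f\|_{L^2(\Omega_n,\gamma)}\rightarrow\|f\|_{L^2(\Omega,\gamma)}$, and weak lower semicontinuity of the norm along the weakly convergent subsequence. The only difference is that you spell out the final density argument for general $f\in L^2(\Omega,\gamma)$, which the paper relegates to a remark in the introduction; that addition is sound and, if anything, makes the argument more complete.
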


\begin{proof}
  By \textbf{Proposition \ref{prop:stima_W22_per_u_dim_infinta}} it follows
  \begin{equation}
    \label{eq:stima_u_n_in_Omega_dim_infinita}
    \|u_n\|_{W^{2,2}(\Omega,\gamma)}\leq C\|f\|_{L^2(\Omega_n,\gamma)}
  \end{equation}
  where $C=C(\lambda)$ is the constant of the \textbf{Theorem \ref{thm:stima_max_regolarita}}.

  We remark that
  \[\lim_{n\rightarrow\infty}\|f\|_{L^2(\Omega_n,\gamma)}=\|f\|_{L^2(\Omega,\gamma)}\]
  since $\gamma(\Omega_n\backslash\Omega)\rightarrow0$.

  By the weak convergence of $u_n$ to $u$ we have
  \[\|u\|_{W^{2,2}(\Omega,\gamma)}\leq\limsup_{n\rightarrow\infty}\|u_n\|_{W^{2,2}(\Omega,\gamma)}.\]
  Letting $n\rightarrow\infty$ in \eqref{eq:stima_u_n_in_Omega_dim_infinita} we get our claim.
\end{proof}

\section{The Neumann boundary condition}

In this section we put under \textbf{Assumption \ref{Ipostesi_su_Omega}} and we prove that the weak solution $u$ of \eqref{Neum_probl_infinite_dim} satisfies a Neumann type boundary condition.

First we prove a useful lemma.
\begin{prop}
\label{lem:zero_q_o}
  If $u\in L^p(\partial\Omega,\rho)$ and
  \[\int_{\partial\Omega} u\varphi\ d\rho=0\quad\forall\varphi\in\F C^1_b(X),\]
  then $u=0$ $\rho-$a.e. in $\partial\Omega$.
\end{prop}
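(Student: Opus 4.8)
The plan is to show that the measure $u\,d\rho$ on $\partial\Omega$ annihilates a dense subspace of $L^{p'}(\partial\Omega,\rho)$, hence $u=0$ $\rho$-a.e. The key point is a density statement: the restrictions to $\partial\Omega$ of the traces of functions in $\F C^1_b(X)$ (equivalently, the cylindrical smooth bounded functions themselves, since these are already continuous on all of $X$) are dense in $L^{p'}(\partial\Omega,\rho)$ for every $p'<\infty$. Once this is available, the hypothesis $\int_{\partial\Omega}u\varphi\,d\rho=0$ for all $\varphi\in\F C^1_b(X)$ extends by continuity to $\int_{\partial\Omega}u\varphi\,d\rho=0$ for all $\varphi\in L^{p'}(\partial\Omega,\rho)$; choosing $\varphi=\mathrm{sgn}(u)|u|^{p-1}$ (which lies in $L^{p'}$ since $u\in L^p$) gives $\int_{\partial\Omega}|u|^p\,d\rho=0$, whence $u=0$ $\rho$-a.e.

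So the real content is the density of cylindrical functions in $L^{p'}(\partial\Omega,\rho)$. First I would recall that $\rho$ is a finite Borel measure on $\partial\Omega$ (this follows from \textbf{Assumption \ref{Ipostesi_su_Omega}}, which is exactly what guarantees the surface measure is well-defined and finite, cf. \cite{feyel,CeladaLunardi}). For a finite Borel measure on a metric space, bounded Lipschitz (a fortiori bounded continuous) functions are dense in $L^{p'}$ for every finite $p'$, by the standard argument: simple functions are dense, and any Borel set can be approximated in measure by open/closed sets, whose indicators are approximated by Lipschitz functions $x\mapsto\max(0,1-k\,\mathrm{dist}(x,C))$. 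Thus it suffices to approximate a bounded continuous function on $\partial\Omega$ by cylindrical ones.

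For that last step I would use that $\gamma$ is a Radon Gaussian measure on the separable Banach space $X$ and that the cylindrical $\sigma$-algebra generated by $X^*$ coincides with the Borel $\sigma$-algebra; more concretely, the projections $\pi_n$ introduced in \textbf{Section \ref{sez:approssimazione_cilind}} satisfy $\pi_n x\to x$ $\gamma$-a.e. (indeed in $X$, by the construction in \cite{LunMirPal}), and one can show $\pi_n x\to x$ $\rho$-a.e. on $\partial\Omega$ as well — or, more robustly, invoke that finite-dimensional cylindrical functions are dense in $L^{p'}$ of any Radon measure on $X$ whose measurable structure is generated by $X^*$ (the analogue for $\gamma$ is \cite[Corollary 3.5.2]{bogachev}, already used in the excerpt; the surface measure $\rho$ is absolutely continuous with respect to $\gamma$ in the appropriate sense and inherits this). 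Given a bounded continuous $\varphi$ on $\partial\Omega$, extend it to a bounded continuous function on $X$ (Tietze), compose with $\pi_n$ to get cylindrical functions $\varphi\circ\pi_n$, which are bounded uniformly and converge to $\varphi$ $\rho$-a.e.; dominated convergence in $L^{p'}(\partial\Omega,\rho)$ finishes it. Finally, since each $\varphi\circ\pi_n$ is of the form $\psi(\widehat h_1(x),\dots,\widehat h_{j(n)}(x))$ with $\psi$ merely continuous and bounded, one further mollifies $\psi$ in $\R^{j(n)}$ to land in $\F C^1_b(X)$, which only improves the approximation.

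The main obstacle is making precise that cylindrical functions are dense in $L^{p'}(\partial\Omega,\rho)$: unlike the case of $\gamma$ on $X$ (where \cite[Corollary 3.5.2]{bogachev} is quoted verbatim), here one is dealing with the surface measure $\rho$, and one must check that the a.e.\ convergence $\pi_n x\to x$ transfers from $\gamma$ to $\rho$, or alternatively that $\rho$ assigns zero mass to the set where convergence fails. This is where \textbf{Assumption \ref{Ipostesi_su_Omega}} and the integrability of $1/|\nabla_H g|_H$ near $\partial\Omega$ enter, via the representation of $\rho$ from \cite{feyel,CeladaLunardi} that ties $\rho$ to $\gamma$; I would cite these references rather than reprove the trace theory. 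Everything else — the duality pairing, the choice of test function $\mathrm{sgn}(u)|u|^{p-1}$, the Tietze extension, the mollification — is routine.
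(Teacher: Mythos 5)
Your overall duality scheme is fine, but the proof has a genuine gap exactly where you locate ``the main obstacle'': the density of cylindrical functions in $L^{p'}(\partial\Omega,\rho)$. Your argument for it hinges on transferring the $\gamma$-a.e.\ convergence $\pi_n x\to x$ to $\rho$-a.e.\ convergence, and the justification you offer --- that ``$\rho$ is absolutely continuous with respect to $\gamma$ in the appropriate sense'' --- is false as stated: $\rho$ is concentrated on $\partial\Omega$, a $\gamma$-null set, so it is singular with respect to $\gamma$, and \cite[Corollary 3.5.2]{bogachev} gives you nothing about $\rho$. The statement you would actually need is that $\rho$ does not charge sets of zero Gaussian capacity and that $\pi_n x\to x$ quasi-everywhere; this is true in the Feyel--de La Pradelle framework but is a nontrivial piece of trace theory, not a routine step, and as written your proof does not close it.

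The paper sidesteps this entirely, and you should too. The functional $v\mapsto\int_{\partial\Omega}u\,\mathrm{Tr}\,v\ d\rho$ is continuous on $W^{1,q}(\Omega,\gamma)$ for $q>p'$ (continuity of the trace operator plus H\"older against $u\in L^p(\partial\Omega,\rho)$), and $\F C^1_b(X)$ is dense in $W^{1,q}(\Omega,\gamma)$ \emph{by definition} of that Sobolev space --- no a.e.\ convergence on $\partial\Omega$ is ever needed. This extends the orthogonality to all $\psi\in W^{1,q}(\Omega,\gamma)$, in particular to all $\psi\in Lip_b(X)$, whose restrictions lie in $W^{1,q}(\Omega,\gamma)$. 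What remains is the purely metric-measure-theoretic fact that $Lip_b$ is dense in $L^q(\partial\Omega,\rho)$ for the finite Radon measure $\rho$ (\textbf{Lemma \ref{lemma:Lip_in_Lp}} in the appendix, proved via Lusin, a Tietze-type extension and Lipschitz approximation of $BUC$ functions), which is essentially the first half of your own argument. So the fix is to reroute the density step through $W^{1,q}(\Omega,\gamma)$ and the trace operator rather than trying to approximate directly on $\partial\Omega$ by cylindrical functions.
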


\begin{proof}
  Since the map
  \[v\mapsto\int_{\partial\Omega}uv\ d\rho\]
  is continuous from $W^{1,q}(\Omega,\gamma)$ to $\R$ for all $q>p'$, and $\F C^1_b(X)$ is dense in $W^{1,q}(\Omega,\gamma)$, it follows that
  \[\int_{\partial\Omega} u\psi\ d\rho=0\quad\forall\psi\in W^{1,q}(\Omega,\gamma).\]
  In particular, since the restrictions to $\Omega$ of the Lipschitz continuous and bounded functions $\psi:X\rightarrow\R$ belong to $W^{1,q}(\Omega,\gamma)$, we have
  \[\int_{\partial\Omega} u\psi\ d\rho=0\quad\forall\psi\in Lip_b(X).\]
  \textbf{Lemma \ref{lemma:Lip_in_Lp}} yields
  \[\int_{\partial\Omega} u\psi\ d\rho=0\quad\forall\psi\in L^q(\partial\Omega,\rho)\]
  and this implies that $u=0$ $\rho-$a.e..
\end{proof}

Now we are ready to prove that the weak solution of \eqref{Neum_probl_infinite_dim} satisfies a boundary condition similar to the Neumann boundary condition.

\begin{prop}
  If $u$ is the weak solution of $\lambda u- Lu=f$ on $\Omega$ then
  \begin{equation}
  \label{eq:Neumann_bound_condition}
  \langle\nabla_H u(x),\frac{\nabla_H g(x)}{|\nabla_H g(x)|_H}\rangle_H=0\quad\rho-\text{a.e }x\in\partial\Omega.
\end{equation}
\end{prop}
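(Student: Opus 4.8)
The plan is to carry out the finite-dimensional-approximation argument exactly as sketched in the introduction, now keeping track of the boundary integrals. I would start from the sequence $\{u_n\}_{n\in\N}$ already constructed, where $u_n(x)=v_n(\pi_G(x))$ solves $\lambda u_n - L^{\Omega_n}u_n = f_{|\Omega_n}$ in $\Omega_n$, with $u_n\rightharpoonup u$ in $W^{2,2}(\Omega,\gamma)$. The first step is to write, for every $\varphi\in\F C^1_b(X)$, the identity $\int_{\Omega_n}(\lambda u_n - L^{\Omega_n}u_n)\varphi\,d\gamma = \int_{\Omega_n} f\varphi\,d\gamma$ and then apply the integration-by-parts formula \eqref{eq:formuala_itegr_part} (componentwise, as in \eqref{integr_formula}, but now on the cylindrical set $\Omega_n$, whose boundary is smooth) to transfer one derivative off $u_n$. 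This produces
\[\int_{\Omega_n}\lambda u_n\varphi\,d\gamma+\int_{\Omega_n}\langle\nabla_H u_n,\nabla_H\varphi\rangle_H\,d\gamma=\int_{\Omega_n}f\varphi\,d\gamma+\int_{\partial\Omega_n}\langle\nabla_H u_n,\tfrac{\nabla_H g_n}{|\nabla_H g_n|_H}\rangle_H\varphi\,d\rho_n,\]
where $g_n$ defines $\Omega_n$ and $\rho_n$ is the associated surface measure. Since $v_n$ solves the Neumann problem on $\Oo_n\times\R^d$, the normal derivative of $u_n$ vanishes on $\partial\Omega_n$, so this last boundary integral is in fact $0$; equivalently, $u_n$ is a weak solution of $\lambda u_n - L^{\Omega_n}u_n = f$ on $\Omega_n$ in the variational sense and no boundary term appears. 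That gives us, for all $\varphi\in\F C^1_b(X)$,
\[\int_{\Omega_n}\lambda u_n\varphi\,d\gamma+\int_{\Omega_n}\langle\nabla_H u_n,\nabla_H\varphi\rangle_H\,d\gamma=\int_{\Omega_n}f\varphi\,d\gamma.\]

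The second step is to re-expand everything over $\Omega$ rather than $\Omega_n$, splitting each integral as $\int_{\Omega_n}=\int_{\Omega}+\int_{\Omega_n\setminus\Omega}$ (note $\Omega\subset\Omega_n$), and then to integrate by parts on $\Omega$ itself using \eqref{eq:formuala_itegr_part} — this is where the boundary measure $\rho$ on $\partial\Omega$ enters. Concretely I rewrite $\int_\Omega\langle\nabla_H u_n,\nabla_H\varphi\rangle_H\,d\gamma$ by moving a derivative onto $u_n$ (legitimate because $u_n\in W^{2,2}(\Omega,\gamma)$ and $\varphi\in\F C^1_b(X)$, and \eqref{eq:formuala_itegr_part} holds under Assumption \ref{Ipostesi_su_Omega}), picking up $\int_{\partial\Omega}\langle\nabla_H u_n,\tfrac{\nabla_H g}{|\nabla_H g|_H}\rangle_H\,\mathrm{Tr}\,\varphi\,d\rho$. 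Combined with the previous display and with $-L^{\Omega}$ written out via its definition on $D(L^\Omega)$, and using that $\gamma(\Omega_n\setminus\Omega)\to0$ so all the $\int_{\Omega_n\setminus\Omega}$-terms vanish (by the Cauchy–Schwarz bounds already used in the previous proposition, using $\|u_n\|_{W^{2,2}(\Omega,\gamma)}\le K$ together with the $W^{2,2}$-bound rephrased on the shrinking annuli), I obtain in the limit
\[\int_\Omega\lambda u\varphi\,d\gamma+\int_\Omega\langle\nabla_H u,\nabla_H\varphi\rangle_H\,d\gamma=\int_\Omega f\varphi\,d\gamma+\int_{\partial\Omega}\mathrm{Tr}\!\left(\langle\nabla_H u,\tfrac{\nabla_H g}{|\nabla_H g|_H}\rangle_H\right)\mathrm{Tr}\,\varphi\,d\rho.\]
Here I use weak convergence $u_n\rightharpoonup u$ in $W^{2,2}(\Omega,\gamma)$ to pass to the limit in the two bulk integrals on the left and, crucially, in the boundary integral: the trace operator $W^{2,2}(\Omega,\gamma)\to L^2(\partial\Omega,\rho)$ (or into some $L^q(\partial\Omega,\rho)$) furnished by \cite{CeladaLunardi} is bounded, hence weakly continuous, so $\mathrm{Tr}\,\nabla_H u_n\rightharpoonup\mathrm{Tr}\,\nabla_H u$ weakly in $L^2(\partial\Omega,\rho)$, and testing against the fixed function $\mathrm{Tr}\,\varphi\cdot\tfrac{\nabla_H g}{|\nabla_H g|_H}$ (which lies in the relevant dual space by the second bullet of Assumption \ref{Ipostesi_su_Omega}) passes to the limit.

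The third and final step is to subtract the weak-solution identity \eqref{eq:weak_solution} for $u$, which leaves
\[\int_{\partial\Omega}\mathrm{Tr}\!\left(\langle\nabla_H u,\tfrac{\nabla_H g}{|\nabla_H g|_H}\rangle_H\right)\mathrm{Tr}\,\varphi\,d\rho=0\qquad\forall\,\varphi\in\F C^1_b(X),\]
and then invoke Proposition \ref{lem:zero_q_o} with $u$ there replaced by $\langle\nabla_H u,\tfrac{\nabla_H g}{|\nabla_H g|_H}\rangle_H$, which belongs to some $L^p(\partial\Omega,\rho)$ by Hölder applied to $\mathrm{Tr}\,\nabla_H u\in L^q(\partial\Omega,\rho)$ and $1/|\nabla_H g|_H\in L^r(\partial\Omega,\rho)$ for all $r>1$; this yields \eqref{eq:Neumann_bound_condition}. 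I expect the main obstacle to be the rigorous justification of passing to the limit in the boundary integral — specifically, verifying that the weak $W^{2,2}$-convergence $u_n\rightharpoonup u$ does transfer to weak $L^q(\partial\Omega,\rho)$-convergence of the traces $\mathrm{Tr}\,\nabla_H u_n$, together with confirming that the test function $\mathrm{Tr}\,\varphi\cdot\tfrac{\nabla_H g}{|\nabla_H g|_H}$ sits in the dual exponent space so that the pairing is continuous. A secondary technical point is that the integration by parts \eqref{eq:formuala_itegr_part} was stated on $\Omega$ under Assumption \ref{Ipostesi_su_Omega}, so one should be slightly careful that moving from the smooth cylindrical $\partial\Omega_n$ (where the classical formula \eqref{integr_formula} applies trivially) to the possibly rough $\partial\Omega$ is done on $\Omega$ directly rather than by taking limits of surface integrals on $\partial\Omega_n$, whose relation to $\rho$ is not immediate.
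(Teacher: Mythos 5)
Your proposal is correct and follows essentially the same route as the paper: integrate by parts on $\Omega$ via \eqref{eq:formuala_itegr_part} to produce the boundary term $\int_{\partial\Omega}\langle\nabla_H u_n,\nabla_H g/|\nabla_H g|_H\rangle_H\varphi\,d\rho$, pass to the limit using that this boundary functional is continuous on $W^{2,2}(\Omega,\gamma)$ (via the trace operator and the integrability of $1/|\nabla_H g|_H$), subtract the weak-solution identity, and conclude with Proposition \ref{lem:zero_q_o}. The only cosmetic difference is that the paper integrates the pointwise equation $\lambda u_n-L^{\Omega_n}u_n=f$ directly over $\Omega$ (using the explicit cylindrical formula for $L^{\Omega_n}u_n$) rather than taking your detour through the weak formulation on $\Omega_n$ and the splitting $\int_{\Omega_n}=\int_\Omega+\int_{\Omega_n\setminus\Omega}$.
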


\begin{proof}
  We fix $\varphi\in \F C_b^1(X)$. We denote by $u_n$ the solution to
  \begin{equation}
    \lambda \psi-L^{\Omega_n}\psi=f_{|\Omega_n}\quad\text{in }\Omega_n.
    \label{eq:probl_cilin_dim_infinita}
  \end{equation}
  We recall that $u_n$ is a cylindrical function and, thanks to the result of \textbf{Section \ref{sez:approssimazione_cilind}}, we have $u_n\in W^{2,2}(\Omega_n,\gamma)$. We multiply the differential equation \eqref{eq:probl_cilin_dim_infinita} by $\varphi$ and we integrate on $\Omega$, getting
  \[\int_\Omega(\lambda u_n-L^{\Omega_n}u_n)\varphi\ d\gamma=\int_\Omega f\varphi\ d\gamma.\]
  We recall that $L^{\Omega_n}u_n$ is cylindrical, then
  \[L^{\Omega_n}u_n(x)=\sum_{i=1}^{q}\partial_{ii}u_n(x)-\widehat{h}_i(x)\partial_i u_n(x). \]
  Therefore, by using \eqref{eq:formuala_itegr_part}, we obtain
  \[\int_\Omega\lambda\varphi u_n\ d\gamma+\int_\Omega\langle\nabla_H u_n,\nabla_H \varphi\rangle_H d\gamma=\int_\Omega f\varphi\ d\gamma+\int_{\partial\Omega}\langle\nabla_H u_n,\frac{\nabla_H g}{|\nabla_H g|_H}\rangle_H \varphi\ d\rho,\]
  where
  \[\langle\nabla_H u_n,\nabla_H \varphi\rangle_H=\sum_{i=1}^{q}\partial_i u_n\partial_i\varphi,\]
  and
  \[\langle\nabla_H u_n,\nabla_H g\rangle_H=\sum_{i=1}^{q}\partial_i u_n\partial_i g.\]
  As in the previous section we have
  \[\lim_{n\rightarrow\infty}\int_\Omega\lambda\varphi u_n\ d\gamma=\int_\Omega\lambda\varphi u\ d\gamma,\]
  and
  \[\lim_{n\rightarrow\infty}\int_\Omega\langle\nabla_H u_n,\nabla_H \varphi\rangle_H d\gamma=\int_\Omega\langle\nabla_H u,\nabla_H \varphi\rangle_H d\gamma,\]
  We claim that the map
  \[v\mapsto\int_{\partial\Omega}\langle\nabla_H v,\frac{\nabla_H g}{|\nabla_H g|_H}\rangle_H \varphi\ d\rho\]
  from $W^{2,2}(\Omega,\gamma)$ to $\R$ belongs to $(W^{2,2}(\Omega,\gamma))'$. Indeed, the function
  \[x\mapsto\langle\nabla_H v(x),\frac{\nabla_H g(x)}{|\nabla_H g(x)|_H}\rangle_H \varphi(x)=:F(x)\]
  belongs to $W^{1,q}(\Omega,\gamma)$ for all $q\in(1,2)$. Moreover $\|F\|_{W^{1,q}(\Omega,\gamma)}\leq \widetilde{C}\|v\|_{W^{2,2}(\Omega,\gamma)},$
  and the trace operator is linear and continuous from $W^{1,q}(\Omega,\gamma)$ to $L^1(\partial\Omega,\rho)$.
  Therefore, since $u_n\rightharpoonup u$ in $W^{2,2}(\Omega,\gamma)$,
  \[\lim_{n\rightarrow\infty}\int_{\partial\Omega}\langle\nabla_H u_n,\frac{\nabla_H g}{|\nabla_H g|_H}\rangle_H \varphi\ d\rho=\int_{\partial\Omega}\langle\nabla_H u,\frac{\nabla_H g}{|\nabla_H g|_H}\rangle_H \varphi\ d\rho.\]
  Then we have
  \[\int_\Omega\lambda u\varphi\ d\gamma+\int_\Omega\langle\nabla_H u,\nabla_H \varphi\rangle_H d\gamma=\int_\Omega f\varphi\ d\gamma+\int_{\partial\Omega}\langle\nabla_H u,\frac{\nabla_H g}{|\nabla_H g|_H}\rangle_H \varphi\ d\rho\]
  and since $u$ is a weak solution of \eqref{Neum_probl_infinite_dim} we get
  \[\int_{\partial\Omega}\langle\nabla_H u,\frac{\nabla_H g}{|\nabla_H g|_H}\rangle_H \varphi\ d\rho=0\]
  for all $\varphi\in \F C^1_b(X)$. By using \textbf{Proposition \ref{lem:zero_q_o}} we obtain \eqref{eq:Neumann_bound_condition}.
\end{proof}

Therefore, if $u\in D(L)$ then $u\in W^{2,2}(\Omega,\gamma)$ and $u$ satisfies the Neumann boundary condition \eqref{eq:Neumann_bound_condition}.

\appendix
\section{Density properties}

In this appendix we show some density results for which we thank Simone Ferrari. Let $(Y,d)$ be a complete metric space and let $\rho$ be a finite Radon measure defined on the Borel sets of $Y$. Let $BUC(Y)$ be the set of real value uniformly bounded continuous functions and let $Lip_b(Y)$ be the set of Lipschitz bounded functions.

\begin{lem}
  \label{lemma:appr_BUC}
  Let $f:Y\rightarrow\R$ be a bounded $\rho-$measurable function. Then for all $\varepsilon>0$ there exists $g\in BUC(Y)$ such that
  \[\rho(\{x\in Y:\ f(x)\neq f_\varepsilon(x)\})<\varepsilon\]
  and
  \[\sup_{x\in Y}|g(x)|\leq 2\sup_{x\in Y}|f(x)|.\]
\end{lem}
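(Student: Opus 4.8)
The plan is to prove this as a quantitative Lusin-type statement, building $g$ explicitly by an inf-convolution (McShane) extension. We may assume $M:=\sup_Y|f|\in(0,\infty)$, since for $M=0$ one takes $g\equiv 0$. It suffices to produce a \emph{compact} set $K\subseteq Y$ with $\rho(Y\setminus K)<\varepsilon$ on which $f$ is uniformly continuous: granting this, the extension step below yields $g\in BUC(Y)$ with $g|_K=f|_K$, so that $\{x\in Y:\ f(x)\neq g(x)\}\subseteq Y\setminus K$ has $\rho$-measure less than $\varepsilon$ (the $f_\varepsilon$ in the displayed formula being this $g$).

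To get $K$, I would use that $f$ is bounded measurable, hence a uniform limit of simple functions $s_m=\sum_{j}c_j^{(m)}\mathbf 1_{A_j^{(m)}}$ with $\{A_j^{(m)}\}_j$ a finite Borel partition of $Y$. Since $\rho$ is a finite Radon measure, it is inner regular with respect to compact sets, so for each $m$ I can pick compact $K_j^{(m)}\subseteq A_j^{(m)}$ with $\rho(A_j^{(m)}\setminus K_j^{(m)})$ so small that $E_m:=\bigcup_j K_j^{(m)}$ (a finite union of compacts, hence compact) satisfies $\rho(Y\setminus E_m)<\varepsilon 2^{-m}$, and then set $K:=\bigcap_m E_m$, which is compact with $\rho(Y\setminus K)<\varepsilon$. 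On $E_m$ the finitely many disjoint compact sets $K_j^{(m)}$ are at positive mutual distance, so $s_m|_{E_m}$ is continuous; hence $f|_K$, being a uniform limit of the continuous functions $s_m|_K$, is continuous, and, $K$ being compact, uniformly continuous, with $|f|\le M$ on $K$.

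For the extension, let $\omega_f$ be the modulus of continuity of $f|_K$ and $\widetilde\omega$ its least concave majorant on $[0,\infty)$; then $\widetilde\omega$ is concave, non-decreasing, finite (bounded by $2M$), with $\widetilde\omega(0)=0$ and $\widetilde\omega(0^+)=0$ (this last point uses $\omega_f(0^+)=0$ and $\omega_f\le 2M$), hence subadditive, and $\widetilde\omega\ge\omega_f$. Define $g_0(x):=\inf_{a\in K}\bigl(f(a)+\widetilde\omega(d(x,a))\bigr)$ for $x\in Y$. One checks $-M\le g_0<\infty$; that $g_0|_K=f|_K$ (using $\widetilde\omega\ge\omega_f$ together with the definition of $\omega_f$); and that $|g_0(x)-g_0(y)|\le\widetilde\omega(d(x,y))$ (using subadditivity of $\widetilde\omega$ and the triangle inequality), so that $g_0$ is uniformly continuous on $Y$ because $\widetilde\omega(0^+)=0$. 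Finally put $g:=(-M)\vee(g_0\wedge M)$; truncation is $1$-Lipschitz, so $g$ is uniformly continuous and bounded, i.e. $g\in BUC(Y)$, and $g|_K=f|_K$ because $|f|\le M$ on $K$, while $\sup_Y|g|\le M\le 2\sup_Y|f|$.

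The main obstacle is the combination in the second paragraph: ordinary measure-theoretic regularity produces only a \emph{closed} set on which $f$ is continuous, and on a noncompact closed set a continuous function need not be uniformly continuous, so the extension formula above would give only a continuous (not uniformly continuous) $g$. Using compactness of $K$ — which is legitimate precisely because $\rho$ is Radon, hence tight — is what repairs this. The only other delicate point is verifying that the least concave majorant $\widetilde\omega$ still satisfies $\widetilde\omega(0^+)=0$, which is exactly what makes the extended function uniformly continuous rather than merely continuous; everything else is a routine check.
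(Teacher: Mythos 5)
Your proof is correct and follows essentially the same route as the paper: use tightness of the finite Radon measure together with a Lusin-type argument to reduce to a compact set $K$ on which $f$ is (uniformly) continuous, then extend $f|_K$ to a function in $BUC(Y)$ and control its sup norm. The only substantive difference is the extension device --- you use a McShane-type inf-convolution with the least concave majorant of the modulus of continuity followed by truncation, whereas the paper invokes Mandelkern's Tietze-type extension $\inf_{y\in K_0}f_0(y)\,d(x,y)/d(x,K_0)$ --- and your variant in fact yields the sharper bound $\sup_Y|g|\le\sup_Y|f|$.
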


\begin{proof}
  We fix $\varepsilon>0$. Since $\rho$ is a Radon measure then there exists $K_0$, compact subset of $Y$, such that $\rho(Y\setminus K_0)<\varepsilon$. By the Lusin theorem there exists a function $f_0\in C(K_0)=BUC(K_0)$ such that:
  \[\rho(\{x\in K_0:\ f_0(x)\neq f_{|K_0}(x)\})<\varepsilon\]
  and
  \[\sup_{x\in K_0}|f_0(x)|\leq \sup_{x\in K_0}|f(x)|\leq\sup_{x\in Y}|f(x)|.\]
  We define the following function, studied in \cite{Mandelkern},
  \[g(x)=\left\{
  \begin{split}
    &f(x) &\text{if }x\in K_0\\
    &\inf_{y\in K_0}f_0(y)\frac{d(x,y)}{d(x,K_0)}&\text{if }x\not\in K_0
  \end{split}
  \right.\]
  then $g$ is a $BUC$ extension of $f_0$ to the whole $Y$. We remark that for $x\not\in K_0$ there exists $y_\varepsilon\in K_0$ such that
  \[d(x,K_0)=\inf_{y\in K_0}d(x,y)\geq d(x,y_\varepsilon)-\varepsilon,\]
  therefore for $x\not\in K_0$ we have
  \[\begin{split}
    |g(x)|=\left|\inf_{y\in K_0}f_0(y)\frac{d(x,y)}{d(x,K_0)}\right|\leq \sup_{x\in Y}|f(x)|\frac{d(x,y_\varepsilon)}{d(x,K_0)}\leq \sup_{x\in Y}|f(x)|\frac{d(x,K_0)+\varepsilon}{d(x,K_0)}
  \end{split}\]
  for all $\varepsilon$. Then for all $x\not\in K_0$ we have
  \[g(x)\leq \sup_{y\in Y} |f(y)|.\]
  Finally
  \[\begin{split}
    \sup_{x\in Y}|g(x)|&=\sup_{x\in Y} |g_{|K_0}(x)+g_{|Y\setminus K_0}(x)|\leq\sup_{x\in K_0}|g(x)|+\sup_{x\in Y\setminus K_0}|g(x)|\\
    &=\sup_{x\in K_0}|f_0(x)|+\sup_{x\in Y\setminus K_0}|g(x)|\leq 2\sup_{x\in Y}|f(x)|.
  \end{split}\]
  Moreover
  \[\begin{split}
    \rho(\{x\in Y:\ g(x)\neq f(x)\})&\leq \rho(\{x\in K_0:\ g(x)\neq f(x)\})+\rho(\{x\in Y\setminus K_0:\ g(x)\neq f(x)\})\\
    &\leq \rho(\{x\in K_0:\ f_0(x)\neq f(x)\})+\rho(Y\setminus K_0)<2\varepsilon.
  \end{split}
  \]
\end{proof}

\begin{lem}
\label{lemma:Lip_in_Lp}
  The subspace $Lip_b(Y)$ is dense in $L^p(Y,\rho)$ with respect the norm $\|\cdot\|_{L^p(Y,\rho)}$.
\end{lem}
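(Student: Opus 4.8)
The plan is to approximate in three successive reductions, replacing $f$ first by a bounded measurable function, then by a $BUC$ function via \textbf{Lemma \ref{lemma:appr_BUC}}, and finally by a Lipschitz bounded function via an inf-convolution regularization; throughout we use that $\rho$ is a finite measure.

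First I would reduce to bounded functions. Given $f\in L^p(Y,\rho)$, the truncations $f_M:=(f\wedge M)\vee(-M)$ satisfy $|f_M|\le|f|$ and $f_M\to f$ pointwise, hence $f_M\to f$ in $L^p(Y,\rho)$ by dominated convergence. So it suffices to approximate a bounded $\rho$-measurable function $f$, say with $\sup_Y|f|\le M$.

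Next I would pass to $BUC(Y)$. Fix $\delta>0$. By \textbf{Lemma \ref{lemma:appr_BUC}} there is $g\in BUC(Y)$ with $\rho(\{x\in Y:\ f(x)\neq g(x)\})<\delta$ and $\sup_Y|g|\le 2M$. Then $|f-g|\le 3M$ everywhere and $|f-g|$ vanishes off a set of $\rho$-measure $<\delta$, so
\[\|f-g\|_{L^p(Y,\rho)}^p=\int_{\{f\neq g\}}|f-g|^p\,d\rho\le (3M)^p\,\delta,\]
which is arbitrarily small. Hence $BUC(Y)$ is dense in $L^p(Y,\rho)$, and it remains to approximate a given $g\in BUC(Y)$ by elements of $Lip_b(Y)$.

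Finally I would regularize by inf-convolution: for $k\in\N$ set
\[g_k(x):=\inf_{y\in Y}\bigl(g(y)+k\,d(x,y)\bigr),\qquad x\in Y.\]
Each $g_k$ is $k$-Lipschitz, and taking $y=x$ on one side and using $g(y)\ge\inf_Y g$ on the other gives $\inf_Y g\le g_k\le g$; thus $g_k\in Lip_b(Y)$. Using the uniform continuity of $g$ one sees $g_k\to g$ uniformly: given $\eta>0$, choose $\sigma>0$ with $|g(x)-g(y)|<\eta$ whenever $d(x,y)<\sigma$; for $k$ so large that $k\sigma\ge 2\sup_Y|g|$, splitting the infimum according to $d(x,y)<\sigma$ or $d(x,y)\ge\sigma$ yields $0\le g(x)-g_k(x)\le\eta$ for every $x\in Y$. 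Since $\rho(Y)<\infty$, uniform convergence implies $\|g-g_k\|_{L^p(Y,\rho)}\le\eta\,\rho(Y)^{1/p}\to0$. Chaining the three approximations proves that $Lip_b(Y)$ is dense in $L^p(Y,\rho)$.

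I expect the only delicate point to be the uniform convergence $g_k\to g$: one must genuinely invoke \emph{uniform} continuity of $g\in BUC(Y)$, so that the modulus $\sigma$ is independent of the base point, since on a non-compact $Y$ mere pointwise continuity would not give a uniform estimate; the rest is routine bookkeeping using the finiteness of $\rho$.
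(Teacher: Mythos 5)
Your proof is correct and follows the same three-step reduction as the paper: truncate $f$ to a bounded measurable function, pass to $BUC(Y)$ via \textbf{Lemma \ref{lemma:appr_BUC}} (using that both functions are uniformly bounded and differ only on a set of small measure, so the $L^p$ error is controlled by $(3M)^p\delta$ — the paper's $3k\,\rho(\{\widetilde f_k\neq f_k\})^{1/p}$ bound is the same computation), and finally approximate the $BUC$ function uniformly by Lipschitz functions, after which finiteness of $\rho$ converts uniform convergence into $L^p$ convergence. The only real divergence is in the last step: the paper simply cites Miculescu for the existence of $g_k\in Lip_b(Y)$ with $\|g_k-\widetilde f_k\|_{L^\infty(Y)}\le 2^{-k}$, whereas you prove this directly with the inf-convolution $g_k(x)=\inf_y\bigl(g(y)+k\,d(x,y)\bigr)$, correctly verifying the $k$-Lipschitz bound, the two-sided bound $\inf_Y g\le g_k\le g$, and the uniform convergence from the uniform continuity of $g$ (and you are right that uniform, not merely pointwise, continuity is the essential hypothesis there). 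Your version buys a self-contained argument at the cost of a few extra lines; the paper's buys brevity at the cost of an external reference. Both are valid.
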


\begin{proof}
  Let $f\in L^p(Y,\rho)$. For $k\in\N$ we put
  \[f_k(x)=\left\{\begin{split}
    &k &&\text{if }f(x)> k\\
    &f(x) &&\text{if }f(x)\in[-k,k]\\
    &-k &&\text{if }f(x)<-k\\
  \end{split}\right.\]
  so that $f_k(x)$ is bounded and measurable. Then by \textbf{Lemma \ref{lemma:appr_BUC}} there exists $\widetilde{f}_k\in BUC(Y)$ such that
  \[\rho(\{x\in Y:\ \widetilde{f}_k(x)\neq f_k(x)\})\leq\frac{1}{2^k}\]
  Then by \cite{Miculescu} there exists $g_k\in Lip_b(Y)$ such that
  \[\|g_k-\widetilde{f}_k\|_{L^\infty(Y)}\leq\frac{1}{2^k}.\]
  Now we estimate
  \[\|g_k-f\|_{L^p(Y,\rho)}\leq \|g_k-\widetilde{f}_k\|_{L^p(Y,\rho)}+\|\widetilde{f}_k-f_k\|_{L^p(Y,\rho)}+\|f_k-f\|_{L^p(Y,\rho)},\]
  where
  \[\begin{split}
     \|g_k-\widetilde{f}_k\|_{L^p(Y,\rho)}&=\left(\int_Y|g_k(x)-\widetilde{f}_k(x)|^p\rho(dx)\right)^{1/p}\\ &\leq\|g_k-\widetilde{f}_k\|_{L^\infty(Y)}\rho(Y)^{1/p}\leq\frac{\rho(Y)^{1/p}}{2^k}.
  \end{split}\]
  Concerning the second term we recall that
  \[\sup_{x\in Y}|\widetilde{f}_k(x)|\leq2\sup_{x\in Y}|f_k(x)|=2k\]
  then
  \[\begin{split}
     \|\widetilde{f}_k-f_k\|_{L^p(Y,\rho)}&=\left(\int_Y|\widetilde{f}_k(x)-f_k(x)|^p\rho(dx)\right)^{1/p}\\
     &=\left(\int_{\{x\in Y:\ \widetilde{f}_k(x)\neq f_k(x)\}}|\widetilde{f}_k(x)-f_k(x)|^p\rho(dx)\right)^{1/p}\\
     &\leq 3k\ \rho(\{x\in Y:\ \widetilde{f}_k(x)\neq f_k(x)\})^{1/p}\leq\frac{3k}{2^{k/p}}.
  \end{split}\]
  Finally we remark that since $f_k\rightarrow f$ $\rho$-a.e. for $k\rightarrow\infty$, and $|f_k(x)|\leq|f(x)|\in L^p(Y,\rho)$, then the Lebesgue theorem yields
  \[\|f_k-f\|_{L^p(Y,\rho)}\rightarrow0,\ k\rightarrow\infty.\]
\end{proof}


\begin{thebibliography}{99}
\bibitem{bogachev} V. I. Bogachev, \emph{Gaussian Measures}, American Mathematical Society, USA, 1998.

\bibitem{Cappa} G. Cappa \emph{On the Ornstein-Uhlenbeck operator in convex sets of Banach spaces}, pre-print, arXiv:1503.02836.

\bibitem{CeladaLunardi} P. Celada, A. Lunardi \emph{Traces of Sobolev functions on regular surfaces in infinite dimensions}, J. Funct. Anal. 266 (2014), n. 4, 1948-1987.

\bibitem{PratoLun} G. Da Prato, A. Lunardi, \emph{Elliptic operators with unbounded drift coefficients and Neumann boundary condition}, J. Differential Equations 198 (2004), 35--52.

\bibitem{LunPrat} G. Da Prato, A. Lunardi,  \emph{Maximal $L^2$ regularity for Dirichlet problems in Hilbert spaces}, J. Math. Pures Appl. 99 (2013), 741-765.

\bibitem{PratoL_domini_infini} G. Da Prato, A. Lunardi,  \emph{Maximal Sobolev regularity in Neumann problems for gradient systems in infinite dimensional domains}, Ann. Inst. H. Poincaré Probab. Statist., 51, n. 3 (2015), 1102-1123.

\bibitem{feyel}D. Feyel, A. de La Pradelle, \emph{Hausdorff measures on the Wiener space}, Pot. Analysis 1 (1992), 177-189.

\bibitem{LunMirPal} A. Lunardi, M. Miranda and D. Pallara, \emph{BV functions on convex domain in Wiener spaces}, Potential Analysis, 43 (2015), 23-48.

\bibitem{Mandelkern} M. Mandelkern, \emph{On the uniform continuity of Tietze extensions}, Arch. Math. 55 (1990), 387-388.

\bibitem{metafune} G. Metafune, J. Pruess, A. Rhandi, R. Schnaubelt  \emph{The domain of the Ornstein-Uhlenbeck operator on an $L^p$-space with invariant measure}, Ann. Scuola Norm. Sup. Pisa Cl. Sci. 5 (2002), 471-485

\bibitem{Meyer} P.-A. Meyer, \emph{Note sur les processus d'Ornstein-Uhlenbeck}, in: Séminaire de Probabilités, XVI, in: Lecture Notes in Math., vol. 920,
Springer-Verlag, Berlin, 1982, 95-133.

\bibitem{Miculescu} R. Miculescu, \emph{Approximations by Lipschitz functions generated by extensions}, Real Anal. Exchange vol. 28, n. 1 (2002), 33-41.

\end{thebibliography}
\end{document}